\title{The snake in the Brownian sphere}
\author{Omer Angel\footnote{University of British Columbia, Department of Mathematics} 
\and 
Emmanuel Jacob\footnote{\'Ecole Normale Sup\'erieure de Lyon, Unit\'e de Math\'ematiques Pures et Appliqu\'ees} 
\and 
Brett Kolesnik\footnote{University of Warwick, Department of Statistics} 
\and 
Gr\'egory Miermont\footnote{\'Ecole Normale Sup\'erieure de Lyon, Unit\'e de Math\'ematiques Pures et Appliqu\'ees, and Institut Universitaire de France}}
\date{February 2025}
\crefname{theorem}{Theorem}{Theorems}
\crefname{thm}{Theorem}{Theorems}
\crefname{mainthm}{Theorem}{Theorems}
\crefname{lemma}{Lemma}{Lemmas}
\crefname{lem}{Lemma}{Lemmas}
\crefname{remark}{Remark}{Remarks}
\crefname{claim}{Claim}{Claims}
\crefname{subclaim}{Sub-claim}{Sub-claims}
\crefname{prop}{Proposition}{Propositions}
\crefname{proposition}{Proposition}{Propositions}
\crefname{defn}{Definition}{Definitions}
\crefname{corollary}{Corollary}{Corollaries}
\crefname{conjecture}{Conjecture}{Conjectures}
\crefname{question}{Question}{Questions}
\crefname{chapter}{Chapter}{Chapters}
\crefname{section}{Section}{Sections}
\crefname{figure}{Figure}{Figures}
\theoremstyle{plain}
\newtheorem{thm}{Theorem}
\newtheorem*{thm*}{Theorem}
\newtheorem{lemma}[thm]{Lemma}
\newtheorem{corollary}[thm]{Corollary}
\newtheorem{prop}[thm]{Proposition}
\theoremstyle{definition}
\theoremstyle{remark}
\newtheorem*{remark}{Remark}
\newcommand{\eps}{\varepsilon}
\renewcommand{\P}{{\mathbb P}}
\newcommand{\R}{{\mathbb R}}
\newcommand{\Z}{{\mathbb Z}}
\newcommand{\bfX}{{\bf X}}
\newcommand{\cov}{\mathrm{Cov}}
\newcommand{\cC}{{\mathcal C}}
\newcommand{\cT}{{\mathcal T}}
\newcommand{\cS}{{\mathcal S}}
\newcommand{\cM}{{\mathcal M}}
\newcommand{\TT}{\mathcal{T}}
\newcommand{\bp}{\mathbf{p}}
\newcommand{\tX}{\widetilde{X}}
\newcommand{\be}{\mathbf{e}}
\renewcommand{\leq}{\leqslant}
\renewcommand{\geq}{\geqslant}
\begin{document}

\maketitle

\begin{abstract}
The Brownian sphere is a random metric space, 
homeomorphic to the two-dimensional sphere, 
which arises as the universal scaling limit of 
many types of random planar maps.
The direct construction of the Brownian sphere is 
via a continuous analogue of the 
Cori--Vauquelin--Schaeffer (CVS) bijection.
The CVS bijection maps labeled trees to planar maps, 
and the continuous version maps Aldous' continuum random tree 
with Brownian labels  
(the Brownian snake) to the Brownian sphere.
In this work, we describe the inverse of the continuous CVS bijection, 
by constructing the Brownian snake as a measurable function of the Brownian sphere.
Special care is needed to work with the orientation of the Brownian sphere.
\end{abstract}

\section{Introduction}\label{S_intro}

The {\it Cori--Vauquelin--Schaeffer (CVS) bijection} \cite{CV81,Sch98}
is a correspondence between labeled trees and rooted and pointed
quadrangulations of sphere, where by {\em pointed} we mean that an additional 
vertex, other than the root, has been distinguished. 
Given a (rooted and pointed) quadrangulation, the inverse CVS bijection 
yields a labeled plane tree, which is naturally embedded in the sphere, 
together with the map. 

A continuum analogue of the CVS bijection may be obtained by
associating a metric measure space $(X,d,\mu)$ with a pair of
real-valued functions $(f,g)$, where $f$ should be interpreted as the
``contour function'' of some $\R$-tree $\cT_f$, and $g$ as a label
function defined on $\cT_f$. 
When the pair of functions is random, with law given
by the {\it Brownian snake} \cite{LG93}, 
the resulting random metric space is called the 
{\it Brownian sphere} \cite{Mie13,LG13}.
More specifically, taking
$f$ to be a normalized Brownian excursion, 
$\cT_f$ is 
{\it Aldous' continuum
random tree (CRT)} \cite{Ald91_I,Ald91_II,Ald93_III}. 
This random $\R$-tree is given Brownian labels
$g$, 
which can be
interpreted as Brownian motion indexed by the CRT. 
The Brownian sphere 
is a singular, spherical 
metric space (almost surely homeomorphic 
to the 2-dimensional sphere \cite{LGP08}, 
but of Hausdorff dimension 4 \cite{legall07}), 
which
describes the scaling limit of many natural models of large planar
maps of the sphere, as was first established by 
Le Gall \cite{LG13} and Miermont \cite{Mie13}.

The purpose of this work is to invert the continuum CVS mapping,
so as to find the Brownian snake (and tree) in the Brownian sphere. 
While the Brownian sphere was first discovered as 
the scaling limit of random planar maps,
it has since been constructed by other means, 
with connections to {\it Liouville quantum gravity} \cite{MS20_I,MS21_II,MS21_III}.
Therefore, our results further emphasize that the Brownian tree and snake 
are fundamental objects in modern probability, which present themselves,  
regardless of the way in which the Brownian sphere is constructed. 

Our main result is the following. 

\begin{thm}\label{T_main}
  Let $(X,d,\mu)$ be the Brownian sphere, and let $x^0,x^1$ be two
  independent points in $X$, drawn 
  according to the measure
  $\mu$.  
  Then, almost surely, there exists a measurable function of
  $(X,d,\mu,x^0,x^1)$, that outputs
  an $\R$-tree $\cT$ 
  and a label function $Z:\cT\to\R$, 
  in such a way that:
\begin{enumerate}[nosep,label=(\roman*)]
  \item $\cT$ has the law of the CRT,
  \item $Z$ are Brownian labels on $\cT$, and
  \item the continuum CVS mapping applied to $(\cT,Z)$ recovers $(X,d,\mu)$.  
  \end{enumerate}
\end{thm}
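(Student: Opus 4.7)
The plan is to recover the labelled tree $(\cT, Z)$ from the data $(X, d, \mu, x^0, x^1)$ in three stages: first the label function $Z$, then the underlying tree $\cT$, and finally the contour exploration. In the CVS picture, $x^0$ will play the role of the argmin of $Z$, while $x^1$ will determine the starting point of the contour.

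For the label function, the defining property of the continuum CVS mapping gives $d(x^0, x) = Z(x) - \min Z$ for every $x \in X$, so I simply set $Z(x) := d(x^0, x)$. This is well-defined on $X$ because CVS identifies only pairs of points having the same label, and the minimum of $Z$ equals zero, attained at $x^0$. For the tree structure, I plan to invoke the known geometry of the Brownian sphere (after Le Gall and Miermont): the cut locus of $x^0$ in $X$ — the set of points admitting at least two geodesics to $x^0$ — is almost surely an $\R$-tree which, together with a suitable intrinsic metric, is identified with the CRT $\cT$. The labels on $\cT$ are then the restriction of the function $Z$ on $X$ to this embedded tree.

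The contour exploration is recovered using $x^1$ together with the orientation of $X$. Almost surely $x^1$ is a leaf of $\cT$ with a unique representative in the quotient, so I use it as the contour's basepoint. The orientation of $X$, coming from its a.s.\ topological-sphere structure, induces at each point of $\cT$ a cyclic order on its incident branches, and hence a preferred direction of travel when ``going around'' the tree. These local data should piece together into a continuous contour surjection $c\colon [0,1] \to \cT$, plausibly defined by a limiting procedure (for instance, via $\eps$-discretizations of $\cT$ and passage to the limit, or by realizing $X$ as a scaling limit of oriented quadrangulations and transporting the discrete CVS inversion).

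Once $(\cT, Z)$ is obtained, it remains to verify (i) that $\cT$ has the law of the CRT and $Z$ is Brownian on it, which should follow from the aforementioned geometric results on the cut locus, and (ii) that continuum CVS applied to $(\cT, Z)$ recovers $(X, d, \mu)$, which should hold essentially by construction, since the labels on $X$ are its CVS labels and the contour is built as the CVS contour. The main obstacle is the definition of the contour as a measurable function of $(X, d, \mu, x^0, x^1)$ and of the orientation; this is the ``special care'' alluded to in the abstract. The orientation must be handled consistently — the contour direction reverses under orientation reversal — and the local cyclic orders at branch points must be shown to assemble into a genuine global measurable functional, a subtle point because the cut-locus tree is dense in $X$ and its branch set is uncountable.
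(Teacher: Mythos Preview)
Your broad architecture is right --- recover the labels from distances to one marked point, recover the tree topology from the cut locus with respect to that same point, and use the other marked point as the contour origin --- and this matches the paper (up to swapping the names $x^0,x^1$, which is harmless by symmetry). But two key ideas are missing, and without them the proof does not go through.

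First, you write that the cut locus, ``together with a suitable intrinsic metric,'' is the CRT. This is the crux: what metric? The restriction of $d$ to the cut locus is \emph{not} the CRT metric; the projection $\bp_\be$ from $\mathrm{Skel}(\cT_\be)$ to the cut locus is only a homeomorphism, not an isometry. The paper's idea (step 5 of the outline, and \S4.4) is that since $Z$ restricted to a branch of $\cT_\be$ is a standard Brownian motion in the $d_\be$-parametrisation, one recovers $d_\be(u,v)$ as the \emph{quadratic variation} of $Z$ along the unique arc from $u$ to $v$ in the cut locus. This is a genuine ingredient you have not supplied, and ``suitable intrinsic metric'' is not a substitute for it. (A related small point: your $Z(x)=d(x^0,x)$ is nonnegative and vanishes at the argmin, not at the root; to get Brownian labels one must shift by $-d(x^0,x^1)$ so that $Z$ vanishes at the contour basepoint.)

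Second, you say the orientation ``comes from its a.s.\ topological-sphere structure.'' It does not: a topological sphere has two orientations and $(X,d,\mu,x^0,x^1)$, as an element of $m\mathcal{M}^{2\bullet}$, carries no preferred one. This is exactly why the paper passes to $m\mathcal{M}^{2\bullet}\times\{\pm1\}$ and augments the input with an independent Rademacher bit $\epsilon$. As for actually recovering the contour time, the paper does not use a limiting discretisation: for each $x$ in a full-measure set it builds a Jordan loop $\gamma(x)$ from a cut-locus arc and a geodesic arc joining $x^0$ and $x$, and then reads off $p_W^{-1}(x)$ as the $\mu$-area of the disk that $\gamma(x)$ bounds on the side dictated by $\epsilon$. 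This replaces your ``piece together local cyclic orders'' with a direct, measurable formula.
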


The last point of the theorem requires some interpretation, 
because the continuum CVS mapping will formally be defined 
on pairs of functions encoding $(\cT,Z)$, which endow the tree $\cT$ 
with some extra structure; namely, with a certain ``planar order.'' 
See \cref{T:introduction-1} below for a formal version of \cref{T_main}.

\subsection{The construction}
\label{S_outline}

As we will see, the construction in \cref{T_main} is quite natural.  
In order to describe the inverse continuum CVS mapping, 
we use the geometric notion of a {\it cut locus}.
We recall that, if $X$ is a geodesic metric space, the cut locus of $X$ with
respect 
to a point $x\in X$ is the set of all points $z$ such that there are
at least two distinct geodesics between $z$ and $x$. 
Let $\cC(X,d,x)$ denote the cut locus of the Brownian sphere $(X,d)$ with respect to $x$.
For the Brownian sphere, Le Gall \cite{LG10} showed that, 
for $\mu$-almost every $x$, the cut locus $\cC$ has measure $0$, and is dense in $X$.
The inverse continuum CVS mapping is constructed in several steps, as outlined below:

\begin{enumerate}[nosep]
\item Given the doubly marked Brownian sphere $(X,d,\mu,x^0,x^1)$, let
  $\cC=\cC(X,d,x^1)$ be the cut locus with respect to $x^1$. 
\item For each point $y\in\cC$, define $Z(y) = d(y,x^1) -
  d(x^0,x^1)$. 
\item Observe that $(\cC,d)$ is almost surely (a.s.)\ path-connected and has no cycles.
  By local connectivity there is a topology on $\cC$ in which it is a dendrite
  embedded in the Brownian sphere.
\item Observe that $\cC$ does not include the leaves of the dendrite, however, the labels 
$Z$ extend continuously to
the closure $\cT$ of $\cC$ with respect to a local topology, intrinsic to $\cC$. 
\item Define a metric on $\cT$ so that $Z:\cT\to\R$ is a Brownian
  motion indexed by $\cT$. 
  To find the distance, note that $d_\cT(u,v)$ is the 
  quadratic variation of the function $Z$ along the unique path between $u$ and $v$ in $\cT$. 
\item The orientation of the Brownian sphere  
  induces an order on $\cT$, making it a plane tree, 
  and thereby canonically encoded by a contour process. 
\end{enumerate}

These steps will be justified in \cref{sec:proof-theor-refs}, 
where we will also address the delicate question of the measurability of the resulting mapping. 
Let us discuss a little more the role of the marked points and of the orientation in the construction.

\subsection{The marked points}

The two marked points $x^0,x^1$ indeed play a crucial role in the recovery of the label process. 
Roughly speaking, we will use one marked point to determine its ``time origin,'' 
and the other marked point for its values.

The roles of $x^0$ and $x^1$ are  very different in the mapping that 
we construct.
For a given space $(X,d,\mu,x^0,x^1)$ as in \cref{T_main}, 
a change of $x^0$ amounts to a re-rooting of the tree $\cT$. 
A change in $x^1$, on the other hand,  modifies the tree and process $Z$ profoundly.
Roughly, the tree is cut into countably many trees, 
which are then reconnected in a different manner, 
while maintaining the increments of $Z$ on each part. 

We note that the Brownian sphere usually comes 
equipped with a distinguished point $\rho$, called the {\it root}.
By Le Gall's \cite{LG10} {\it re-rooting property}, 
$\rho$ is equal in distribution to a random point $x$ drawn according to the measure $\mu$.
The two marked points $x^0,x^1$ in our construction are independent, 
with the same distribution as $\rho$.

\subsection{Choosing an orientation}

The choice of an orientation for the Brownian sphere also plays a crucial role in our construction.  
The Brownian sphere is homeomorphic to a sphere, and thus orientable. 
Previous works, however, have mostly avoided explicitly discussing the issue of orientation.

Both constructions of the Brownian sphere, as a quotient of the CRT or via Liouville quantum gravity, 
give rise to a natural orientation of the sphere.
However, if we are given the Brownian sphere as an unoriented metric space, 
there are two possible orientations.
Reversing the orientation of the sphere corresponds to a reflection of the CRT.

It is worth explaining what we mean when discussing the Brownian sphere as an oriented space. 
It is not a smooth manifold, so some standard definitions fail.
There are several possible equivalent definitions for the orientation of a topological sphere $X$:
\begin{enumerate}[nosep]
\item For each simple loop we may regard one side as the interior and the other as the exterior, making the choice continuously on the space of loops in $X$.
\item For each collection of pairwise disjoint paths emanating from $x\in X$, 
we can determine a cyclic order, again making the choice continuously on the space of such ``stars.''
\item Observe that the homology group $H_2(X,\Z)$ is isomorphic to $\Z$, and select one of the generators as $+1$ and the other as $-1$.
\end{enumerate}
Of these, the first and third extend naturally to higher dimensions. The first two are ``local'' and generalize to topologies other than the sphere.
In the case of a topological sphere, these definitions are all equivalent.

To deal with the fact that the Brownian sphere has been defined as an {\it unoriented} metric space, there are two approaches that one might take.
One could consider the Brownian sphere as an oriented metric space to begin with.
This would require revisiting the definition of the Brownian sphere, and to view it as a random element of a space of oriented metric spheres. Working in such an abstract space would create some undesirable technicalities. 
Specifically, a number of steps in the construction of the Brownian sphere would need to be revisited to show that all steps work with oriented spheres.
For example,  Le Gall and Paulin \cite{LGP08} used Moore's theorem to show that the Brownian sphere is homeomorphic to the sphere.
The usual form of Moore's theorem does not provide an orientation for the quotient space, though one can verify that there is a natural orientation.

Another approach, which we will follow, is to simply consider the Brownian sphere as a metric space, and assign to it a random orientation based on an independent variable $\epsilon\in\{\pm1\}$, to account for the two possible orientations. This is conceptually simpler, but poses some measurability issues that will need to be addressed.

\subsection{Other Brownian surfaces}

The Brownian sphere is only one element in a family of compact \cite{BeMi17,BeMi22} or non-compact \cite{CLG14,BaMiRa19}
Brownian random surfaces, including the Brownian plane, disc, and any compact orientable surface with or without boundary. 
Although we did not work out the details, we are confident that the methods and results of this paper generalize to these objects with little change, and similar inverse bijections to that of  \cref{T_main,T:introduction-1} can be derived in these settings, because Brownian random surfaces all have similar constructions in terms of labeled tree-like structures. 

For instance, the {\it Brownian plane} \cite{CLG14}, which is a random pointed metric measure space $(X,d,\mu,x^0)$ that is a.s.\ homeomorphic to $\R^2$, is obtained if the Brownian excursion used to generate the CRT is replaced by a two-sided, infinite Brownian motion encoding the infinite, self-similar CRT, see e.g.\ \cite{BaMiRa19}. In this setting, the second marked point $x^1$ can be taken to be at infinity.
The labels $Z$ are well-defined, since there is a unique Busemann function $B_{x^0}(x) = d(x,\infty)-d(x^0,\infty)$ on the Brownian plane, and the cut locus with respect to infinity is the set of points from which one can start multiple geodesics rays. From these basic facts, one should be able apply the methods of \cref{sec:proof-theor-refs} with little change. 
 In this situation, however, the ``orientation'' random variable $\epsilon_h$ appearing below in \eqref{eq:epsilon-h} would need to be defined in some other way, but fortunately, there is some flexibility in its choice, see the last remark of \cref{sec:the-role-of-epsilon}.

\subsection{Organization  of the paper}
The rest of the paper consists of three sections. 
In \cref{sec:rand-snak-brown}, we review the basic constructions of the Brownian tree, snake and sphere, and properly define the continuum Cori-Vauquelin-Schaeffer mapping that we want to invert. 
\cref{sec:cont-inverse-cvs} states and discusses \cref{T:introduction-1}, a more precise version of  \cref{T_main}, and shows how the latter follows from the former. 
\cref{sec:proof-theor-refs} provides the proof of \cref{T:introduction-1}, following the steps  
in \cref{S_outline}.

\subsection{Acknowledgments}

We thank Jean-Fran\c{c}ois Le Gall for useful discussions. 
The authors would like to thank the Isaac Newton Institute for Mathematical Sciences, Cambridge,  for support and hospitality during the program  Random geometry, where initial work on this paper was undertaken. 
This work was supported by EPSRC grant EP/K032208/1.
OA is supported in part by NSERC.

\section{Random snakes and the Brownian sphere}
\label{sec:rand-snak-brown}

We first review the definitions of some of the known objects we will work with, namely, the Brownian tree, snake and sphere, and the properties that will be needed in the proof of 
\cref{T:introduction-1} below.

\subsection{Deterministic encodings}
\label{sec:determ-encod}

Let us first give a general discussion on some metric structures encoded by deterministic functions.

\subsubsection{Trees coded by functions}
\label{sec:trees-functions}

If $f:[0,1]\to \mathbb{R}$ is a continuous function 
such that $f(0)=f(1)=0$, we define
$$d_f(s,s')=f(s)+f(s')-2\max\left(\inf_{[s\wedge s',s\vee
    s']}f,\inf_{[0,s\wedge s']\cup [s\vee s',1]}f\right)\, ,$$
which defines a pseudometric on $[0,1]$. This means that it is a symmetric
function that 
satisfies the triangle inequality, and vanishes on the diagonal.
In fact, we can alternatively view $f$ as a function $f:\mathbb{S}^1\to \R$, where the unit circle $\mathbb{S}^1$ is identified with $\R/\Z$, and, with this identification, the expression for $d_f$ simplifies to 
$$d_f(s,s')=f(s)+f(s')-2\max\left(\inf_{[s,s']_\circ}f,\inf_{[s',s]_\circ}f\right)\, ,$$
where $[s,s']_\circ$ and $[s',s]_\circ$ are the two naturally oriented arcs of $\mathbb{S}^1$ between $s$ and $s'$.  

The quotient
$\cT_f=[0,1]/\{d_f=0\}$, endowed with the distance inherited from
$d_f$, is an $\R$-tree, and for $a,b\in \TT_f$, we denote by $[[a,b]]_f$ the unique geodesic
segment between $a$ and $b$ in $(\cT_f,d_f)$. 

 It naturally carries a distinguished point
$\rho_f=p_f(\mathrm{argmin}\ f)$, where $p_f:[0,1]\to \TT_f$ is the
quotient mapping. Note $d_f(s,t)=0$ whenever $f$ attains its infimum at $s$ and $t$, so $\rho_f$ is well defined. 

We call $(\cT_f,d_f,\rho_f)$ the 
{\it rooted tree coded by $f$}.

\subsubsection{The mating of trees metric}\label{sec:mating-trees-metric}

Let us denote by $\cS$ the space of pairs $h=(f,g)$ of 
continuous functions $f,g:[0,1]\to \R$, taking value $0$
at $0$ and $1$. 
We associate with $f$ and $g$ the trees $(\cT_f,d_f)$ and $(\cT_g,d_g)$ as above\footnote{In our applications below, starting
  with \cref{sec:brownian-tree-snake}, we will
  require the further property that $(f,g)$ encodes a ``snake
  trajectory.'' This means that $g$ can be interpreted as a label function
on the tree $\cT_f$, in the sense that $g(s)=g(t)$ for any $s,t$ such
that $d_f(s,t)=0$. However, we
stress that the discussion of \cref{sec:determ-encod} holds in full generality.}. 

For $h\in \cS$, the {\it mating of trees pseudometric} 
is defined, for every $s,t$ in $[0,1]$, by 
$$d_h(s,t)=\inf\left\{\sum_{i=1}^k d_g(s_i,t_i):
\begin{split}
    k\geq 1, s=s_1,t_1,s_2,t_2,\ldots,s_k,t_k=t\\
    d_f(t_j,s_{j+1})=0,1\leq j\leq k-1
  \end{split}\right\}\, .$$
The quotient $X_h=[0,1]/\{d_h=0\}$, endowed with the distance  inherited from
$d_h$, is called the {\it mating of trees coded by $h$}. Informally, it is
obtained by ``wrapping'' the tree $(\cT_g,d_g)$ around the tree
$\cT_f$. Note that the construction is not at all symmetric in
the two trees. In particular, it depends strongly on $d_g$, but
depends on $f$ only through the set $\{d_f=0\}$, that is, on the topological space $\TT_f$. 

As a consequence of the definition of the quotient metric $d_h$, there is a natural commutative diagram: 
\begin{equation}\label{eq:commut}
\begin{tikzcd}
{[}0,1{]} \arrow{r}{p_f} \arrow{d}{p_g} \arrow{rd}{p_h}
&\cT_f \arrow{d}{\bp_f}\\
\cT_g \arrow{r}{\bp_g} & X_h\, 
\end{tikzcd}
\end{equation}
All projection mappings
$p_f,\bp_f,p_g,\bp_g,p_h$ are
continuous. 
Therefore, $X_h$ can be seen as the quotient of either
$\TT_f$ or $\TT_g$, with canonical projections
$\bp_f,\bp_g$. 

The space $X_h$ naturally carries two distinguished points 
$$x_h^0=p_h(\mathrm{argmin}\ f)=\bp_f(\rho_f)\quad \mbox{ and }\quad
x_h^1=p_h(\mathrm{argmin}\ g)=\bp_g(\rho_g)\, ,$$ where, we recall that 
$\rho_f,\rho_g$ are the roots of the trees $\TT_f,\TT_g$. 

We also define 
the image measure 
$\mu_h=(p_h)_*\mathrm{Leb}_{[0,1]}$, which we call the
{\it area measure on $X_h$}.

\subsubsection{Gromov--Hausdorff-type
  spaces}\label{sec:grom-hausd-type}

In order to consider random versions of 
the metric space constructions
discussed above, with $h\in\cS$ random, 
we will need to introduce Gromov--Hausdorff-type spaces.

Suppose that $(X,d_X,\mu_X,(a_i)_{1\leq i\leq n})$ and
$(Y,d_Y,\mu_Y,(b_i)_{1\leq i\leq n})$ are two compact metric measure spaces
with $n\geq 0$ distinguished points.
We call them {\it isometric}
if there exists an isometry
$\varphi:(X,d_X)\to (Y,d_Y)$ such that $\mu_Y=\varphi_*\mu_X$ and
$\varphi(a_i)=b_i$ for $1\leq i\leq n$. 
We let 
$[X,d_X,\mu_X,(a_i)_{1\leq i\leq n}]$
denote the isometry
class of $(X,d_X,\mu_X,(a_i)_{1\leq i\leq n})$. 
We endow the set  
$m\mathcal{M}^{n\bullet}$ 
of all such isometry classes with the
Gromov--Hausdorff--Prokhorov topology. 
We write $\bfX^{n\bullet}$
to denote a generic element of $m\mathcal{M}^{n\bullet}$, 
to emphasize that it has $n$ marked points. 
When $n=0$, we
write  $m\mathcal{M}=m\mathcal{M}^{0\bullet}$ and $\bfX=\bfX^{0\bullet}$. 
In \cref{sec:role-measure} below, 
we will also consider the Gromov--Hausdorff
space $\mathcal{M}$ of isometry classes $[X,d]$ of compact metric spaces, 
endowed with the Gromov--Hausdorff topology. 

In particular, for every $h\in \cS$, the isometry class $[X_h,d_h,\mu_h,x_h^0,x_h^1]$ 
is an element of $m\mathcal{M}^{2\bullet}$, which we denote by $\bfX_h^{2\bullet}$. 
We also let $\bfX_h=[X_h,d_h,\mu_h]$ be its unmarked version.

\subsection{The continuum CVS mapping $\psi$}\label{sec:cont-cvs-mapp}

Finally, using the above definitions, 
we obtain a mapping
\begin{equation}
  \label{eq:1}
\psi:
  \begin{array}{ccl}
\mathcal{S}&\longrightarrow& m\mathcal{M}^{2\bullet}\\
    h&\longmapsto & \bfX_h^{2\bullet}\, , 
  \end{array}
\end{equation}
which we call the {\it continuum CVS mapping}.

\begin{prop}
  \label{sec:grom-hausd-type-1} The continuum CVS mapping is
  measurable. 
\end{prop}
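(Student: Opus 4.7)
The plan is to realize $\psi$ as the pointwise GHP-limit of a sequence of Borel measurable finite-dimensional approximations $\psi_n$, and then conclude using the fact that $m\cM^{2\bullet}$ is Polish. The hard part is to show that the pseudometric $d_h$, defined as an infimum over admissible sequences subject to the delicate constraint $d_f(t_j,s_{j+1})=0$, depends Borel-measurably on $h$; once this is established, the construction of finite approximations and their GHP convergence is essentially routine.

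The first (and central) step is thus to show that for each fixed $s,t\in[0,1]$, the map $h=(f,g)\mapsto d_h(s,t)$ is Borel measurable on $\cS$. For each $k\geq 1$, let $A_k(h)\subset [0,1]^{2k}$ denote the compact set of admissible sequences $(s_1,t_1,\ldots,s_k,t_k)$ with $s_1=s$, $t_k=t$ and $d_f(t_j,s_{j+1})=0$ for $1\leq j\leq k-1$. Since $h\mapsto d_f$ is continuous as a map $\cS\to C([0,1]^2)$, the set-valued map $h\mapsto A_k(h)$ has compact values and closed graph, hence is upper hemicontinuous. Combined with the joint continuity of $(h,(s_i,t_i))\mapsto \sum_i d_g(s_i,t_i)$, a standard Berge-type compactness argument yields that $F_k(h):=\inf_{A_k(h)}\sum_i d_g(s_i,t_i)$ is lower semicontinuous in $h$, hence Borel; so $d_h(s,t)=\inf_k F_k(h)$ is Borel as a countable infimum of Borel functions.

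Next, for the approximations: I select the time-parameters of the marked points measurably by setting $s_h^0:=\min\{s:f(s)=\min f\}$ and $s_h^1:=\min\{s:g(s)=\min g\}$, which are Borel maps $\cS\to[0,1]$. For each $n\geq 1$ I then define $A_n(h):=\{k/n:0\leq k\leq n\}\cup\{s_h^0,s_h^1\}$ and let $\psi_n(h)\in m\cM^{2\bullet}$ be the metric quotient of the finite pseudometric space $(A_n(h),d_h|_{A_n(h)^2})$, equipped with the uniform probability measure on $\{p_h(k/n):0\leq k\leq n\}$ and marked by $p_h(s_h^0),p_h(s_h^1)$. Since the isometry class of a finite marked metric measure space depends continuously on its finite table of distances, weights and labels, the first step implies that $\psi_n:\cS\to m\cM^{2\bullet}$ is Borel.

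Finally I verify that $\psi_n(h)\to\psi(h)$ in the GHP topology for every $h\in\cS$. The correspondence sending each $p_h(s)\in X_h$ with $s\in[k/n,(k+1)/n]$ to $p_h(k/n)\in\psi_n(h)$ has distortion bounded by twice the modulus of continuity of $d_g$ on the compact square $[0,1]^2$ at scale $1/n$ (using $d_h\leq d_g$), which vanishes by uniform continuity, and the same coupling witnesses Prokhorov convergence $\mu_n(h)\to\mu_h$; the marked points agree by construction. Hence $\psi_n(h)\to\psi(h)$ in $m\cM^{2\bullet}$ for every $h$, and $\psi$ is Borel as the pointwise limit of Borel maps into the Polish space $m\cM^{2\bullet}$.
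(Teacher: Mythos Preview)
Your proof is correct, but the route differs from the paper's in both main steps. For the measurability of $h\mapsto d_h(s,t)$, the paper relaxes the hard constraint $d_f(t_j,s_{j+1})=0$ to $d_f(t_j,s_{j+1})<\delta$, reduces the resulting infimum to one over rationals, and recovers $d_h(s,t)$ as the double limit $\lim_k\lim_{\delta\downarrow 0}d_h^{(k,\delta)}(s,t)$; your Berge-type argument via upper hemicontinuity of the constraint correspondence is a clean alternative and in fact yields the slightly stronger conclusion that each $F_k$ is lower semicontinuous. For the passage to $m\cM^{2\bullet}$, the paper avoids discrete approximations altogether: it factors $\psi=\psi_2\circ\psi_1$, where $\psi_1(h)=(d_h,s_*(f),s_*(g))$ lands in $\mathcal{C}_m\times[0,1]^2$ (measurable by the first step plus the continuity of $(s,t)\mapsto d_h(s,t)$), and $\psi_2$, which sends a pseudometric with two marked times to the corresponding quotient class, is continuous. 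This factorization is somewhat more economical than your finite-net GHP approximation, though your approach has the virtue of making the approximating spaces completely explicit.
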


\begin{proof}
Let $\mathcal{C}_m$ be the space of continuous functions from
    $[0,1]^2$ to $\R_+$ that are pseudometrics. 
  Consider the mapping
$$ \psi_1: \begin{array}{ccl}
\mathcal{S}&\longrightarrow&  \mathcal{C}_m\times [0,1]\times [0,1]\\
    h=(f,g)&\longmapsto & ((d_h(s,t),0\leq s,t\leq 1),s_*(f),s_*(g))\, , 
    \end{array}$$
where $s_*(f)$ (resp.\
$s_*(g)$) is the first times that
$f$ (resp.\ $g$) attains its minimum. 
 We already mentioned
    that $d_h$ is a pseudometric. 
The fact that it is
    continuous can be seen by writing
    $$|d_h(s,t)-d_h(s',t')|\leq d_h(s,s')+d_h(t,t')\leq
    d_g(s,s')+d_g(t,t')\, ,$$
    which goes to $0$ as $(s',t')\to (s,t)$. 
    Moreover, for a fixed integer $k\geq 1$, for $\delta>0$ and for $s,t\in [0,1]$, the quantities
    $$d^{(k,\delta)}_h(s,t)=\inf\left\{\sum_{i=1}^k d_g(s_i,t_i):\begin{split}
    s=s_1,t_1,s_2,t_2,\ldots,s_k,t_k=t\\
    d_f(t_j,s_{j+1})<\delta,1\leq j\leq k-1
  \end{split}\right\}$$
are measurable functions of $h$ (note that the infimum can be taken
over choices of $t_1,s_2,t_2,\ldots,s_k$ in $\mathbb{Q}$), so that the same holds for
$d_h(s,t)$, since it equals $\lim_{k\to\infty}\lim_{\delta\downarrow
  0}d^{(k,\delta)}_h(s,t)$. The measurability of $\psi_1$ follows. 

Finally, consider the mapping
$$ \psi_2: \begin{array}{ccl}
 \mathcal{C}_m\times [0,1]\times[0,1]&\longrightarrow&m\mathcal{M}^{2\bullet}\\
     (d,s,t)&\longmapsto &[[0,1]/\{d=0\},d,p_*\mathrm{Leb},p(s),p(t)]
    \end{array}\,,$$
where for $d\in \mathcal{C}_m$, the mapping $p:[0,1]\to [0,1]/\{d=0\}$
is the canonical projection map. This mapping is easily seen to be
continuous. Altogether, we obtain the result, noting that $\psi=\psi_2\circ\psi_1$. 
  \end{proof}

The following lemma is a simple but important observation. 
For $h\in \cS$, we denote by $R(h)$ the reversed path
$(f(1-\cdot),g(1-\cdot))$.

\begin{lemma}\label{L:brown-sphere-metr}
 For every $h\in \mathcal{S}$, the spaces 
 $\bfX_h^{2\bullet}$ and $\bfX_{R(h)}^{2\bullet}$ are
 equal as elements of $m\mathcal{M}^{2\bullet}$. 
\end{lemma}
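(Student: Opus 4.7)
The plan is to exhibit the natural isometry induced by the involution $\sigma:[0,1]\to[0,1]$, $\sigma(s)=1-s$, and check that it descends to a measure- and mark-preserving isometry between the two quotient spaces. Write $\tilde h=R(h)=(\tilde f,\tilde g)=(f\circ\sigma,g\circ\sigma)$.

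First I would verify that $\sigma$ preserves the tree pseudometrics. Using the circular description of \cref{sec:trees-functions}, for any $s,s'\in[0,1]$ one has $\inf_{[s,s']_\circ}f=\inf_{[\sigma(s'),\sigma(s)]_\circ}\tilde f$ (since $\sigma$ is an orientation-reversing homeomorphism of $\mathbb{S}^1$), and symmetrically for the complementary arc, which yields $d_{\tilde f}(\sigma(s),\sigma(s'))=d_f(s,s')$, and the same identity with $g$ in place of $f$. Consequently, given any admissible chain $s=s_1,t_1,s_2,t_2,\ldots,s_k,t_k=t$ for $d_h(s,t)$, the reversed sequence $\sigma(t)=\sigma(t_k),\sigma(s_k),\sigma(t_{k-1}),\sigma(s_{k-1}),\ldots,\sigma(t_1),\sigma(s_1)=\sigma(s)$ is an admissible chain for $d_{\tilde h}(\sigma(t),\sigma(s))$ with the same total cost, because the constraint $d_f(t_j,s_{j+1})=0$ translates into the analogous constraint for $\tilde f$ after reindexing. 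Taking infima and using symmetry of the pseudometric gives $d_{\tilde h}(\sigma(s),\sigma(t))=d_h(s,t)$ for all $s,t\in[0,1]$.

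From this identity, $\sigma$ descends to a well-defined bijective isometry $\varphi:X_h\to X_{\tilde h}$ such that $\varphi\circ p_h=p_{\tilde h}\circ\sigma$. The area measure is preserved because Lebesgue measure on $[0,1]$ is $\sigma$-invariant, so $\varphi_*\mu_h=(p_{\tilde h})_*\sigma_*\mathrm{Leb}=(p_{\tilde h})_*\mathrm{Leb}=\mu_{\tilde h}$. For the marked points, one only needs to note that if $s_\star$ realises the minimum of $f$, then $\sigma(s_\star)$ realises the minimum of $\tilde f$; although argmin need not be a singleton, the formula for $d_f$ shows that $d_f(s,s')=0$ whenever $f$ attains its infimum at both $s$ and $s'$, so all such points share a common image under $p_h$, and likewise on the $\tilde h$ side. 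Therefore $\varphi(x_h^0)=\varphi(p_h(s_\star(f)))=p_{\tilde h}(\sigma(s_\star(f)))=x_{\tilde h}^0$, and the same argument applied to $g$ gives $\varphi(x_h^1)=x_{\tilde h}^1$.

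The four conditions (isometry, bijection, measure preservation, matching of distinguished points) defining isometry in $m\mathcal{M}^{2\bullet}$ are thus met, which proves $\mathbf{X}_h^{2\bullet}=\mathbf{X}_{R(h)}^{2\bullet}$. There is no substantive obstacle here; the only point requiring a modicum of care is the bookkeeping in the chain argument, where one must check that reversing a chain for $h$ produces an admissible chain for $\tilde h$ with identical cost, but this is a direct reindexing since the constraints $d_f(t_j,s_{j+1})=0$ and $d_{\tilde f}(\sigma(s_{j+1}),\sigma(t_j))=0$ are equivalent.
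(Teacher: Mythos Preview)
Your proof is correct and follows exactly the approach the paper indicates: the paper simply notes that $s\mapsto 1-s$ induces an isometry between the two doubly marked measure spaces, and you have carefully spelled out the verification (preservation of $d_f$, $d_g$, hence $d_h$, and of the measure and marked points) that the paper leaves implicit. The reversal of the chain is not actually needed---applying $\sigma$ termwise to an admissible chain for $d_h(s,t)$ already yields an admissible chain for $d_{\tilde h}(\sigma(s),\sigma(t))$---but your version via $d_{\tilde h}(\sigma(t),\sigma(s))$ and symmetry is equally valid.
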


The proof is immediate, noting that $s\in [0,1]\mapsto 1-s$ 
induces an isometry between $(X_h,d_h,\mu_h,x_h^0,x_h^1)$ 
and $(X_{R(h)},d_{R(h)},\mu_{R(h)},x_{R(h)}^0,x_{R(h)}^1)$. 
We remark that if one considers the quotient spaces as oriented spheres, 
then $\bfX_{R(h)}^{2\bullet}$ is the reflection of $\bfX_{h}^{2\bullet}$, 
i.e. the same space with the opposite orientation.

\subsection{The Brownian tree, snake and sphere}\label{sec:brownian-tree-snake}

The Brownian tree, snake and sphere
are obtained by applying the 
above constructions to 
certain  natural random
functions. 

Let $(\be_t,0\leq t\leq 1)$ be a normalized Brownian excursion. 
The tree $( \cT_\be,d_\be)$ that it encodes is called Aldous'
{\it continuum random tree} (CRT). The Brownian excursion is
nonnegative, so its root is simply 
$\rho_\be=p_\be(0)$.

Next, conditionally given $\be$, define a continuous centered Gaussian process
$(Z_t,0\leq t\leq 1)$ with covariance function
$$\cov(Z_s,Z_t)=\inf_{s\wedge t\leq u\leq s\vee t}\be_{u}.$$
The fact that such a process has a continuous version is well known. 
Moreover, almost surely, $Z_s=Z_t$ for
every $s,t$ such that $d_\be(s,t)=0$, so that $Z$ can be seen as a
process indexed by $\cT_\be$. 
Indeed, this process can be interpreted as  Brownian motion indexed by
$\cT_\be$, with value $0$ at $\rho_\be$, and with independent
increments over disjoint paths. 
The pair $W=(\be,Z)$ is a random element of $\cS$, which we call the 
{\em Brownian snake}\footnote{In fact, what is usually called the {\em Brownian snake}
 is the 
path-valued Markov process encoding the trajectories of $Z$ from
the root of $\cT_\be$, as one follows the contour of the tree. The
process $\be$ is called the {\it lifetime process} of the snake, and $Z$ the
{\it head of the snake}. However, this path-valued process
can
be recovered as a continuous function of  $(\be,Z)$.}. Its law is denoted by $\P_{\mathrm{Snake}}$.  

Finally, applying the continuum CVS mapping 
$\psi:h\mapsto \bfX_h^{2\bullet}$ to the Brownian snake, 
we obtain the
(doubly marked) {\it Brownian sphere} $\bfX_W^{2\bullet}$, 
with values in $m\mathcal{M}^{2\bullet}$. 
We denote its law by $\mathbb{P}^{2\bullet}_{\mathrm{Sphere}}=\psi_*\P_{\mathrm{Snake}}$.
We also denote by $\mathbb{P}_{\mathrm{Sphere}}$ 
the law of the \emph{unmarked} Brownian sphere $\bfX_W$, with values in $m\mathcal{M}$.

\section{The inverse continuum CVS mapping $\phi$}
\label{sec:cont-inverse-cvs}

Our goal is to invert the continuum CVS mapping, 
that is, to find a measurable mapping $m\cM^{2\bullet}\to\cS$
that is an (almost sure) inverse of $\psi$ 

One possible approach is as follows.
Suppose we find a Borel subset $A\subset \cS$ such that
$\P_{\mathrm{Snake}}(\cS\setminus A)=0$ and  $\psi|_A$ is injective.
Then, by a general theorem of Lusin and Souslin on measurable functions on
Polish spaces (see, e.g., Kechris \cite[Corollary 15.2]{kechris95}), 
it would follow that $\psi(A) \subset m\cM^{2\bullet}$ is a Borel set, 
and that the inverse $\phi:\psi(A)\to A$ is Borel measurable. 

This approach, however, 
has two major drawbacks.
First, it gives an abstract inverse function, without telling us anything about its nature or how to compute it.
Secondly, this approach requires the continuum CVS mapping to be almost
everywhere injective, which it is {\it not,} due to 
\cref{L:brown-sphere-metr}.
Applying the reflection $R$, while preserving the planar tree structure and Brownian labels, has the effect of reversing the orientation of the Brownian sphere (as will be further discussed in \cref{sec:recovering-z}). 
However, $m\cM^{2\bullet}$ does not include information about the orientation of the sphere. 
Thus, as we will state and prove rigorously below (see \cref{T:introduction-1}), 
the continuum CVS mapping is actually 2-to-1
on a set of full probability for the law $\P_{\mathrm{Snake}}$.

This is also the case for discrete maps, 
if one does not keep track of their orientation.
This has not been an issue in the past since discrete maps 
have always been considered as oriented (not just orientable), 
so that a map and its reflection are considered as distinct objects. 
However, previous works on the Brownian sphere have opted 
to omit the orientation when taking the limit, 
so that a map and its reflection are considered identical.

To overcome this issue, we work in an extended space
$m\mathcal{M}^{2\bullet}\times \{\pm1\}$. 
The coordinate in $\{\pm1\}$ is used to determine the {\it
  orientation} of the Brownian sphere. 
As one might expect, this orientation turns out to be uniformly random
(see \cref{T:introduction-3}). 
However, as we will see, describing 
this choice in a {\it measurable} way requires some care. 

For $h=(f,g)\in \cS$, recall that $s_*(g)$ is the first time that  $g$
attains its minimum. We put 
\begin{equation}
  \label{eq:epsilon-h}
  \epsilon_h= \mathrm{sgn}(1-2s_*(g)) = 
  \begin{cases}
    1& \text{ if }s_*(g)\leq 1/2 \\
    -1 & \text{ if }s_*(g)>1/2\, .
  \end{cases}
\end{equation}
We then extend the mapping $\psi$ by  
\begin{equation}
  \label{eq:7}
  \overline{\psi}:
  \begin{array}{ccl}
\mathcal{S}&\longrightarrow &m\mathcal{M}^{2\bullet}\times \{\pm1\}\\
    h&\longmapsto &(\psi(h),\epsilon_h)\, .
  \end{array}
\end{equation}

When $W=(\be,Z)$ follows the law of the Brownian
snake, the infimum of $Z$ is a.s.\ attained at the unique time $s_*=s_*(Z)$, as
shown in \cite[\S2.3]{LGW06}. Since $Z$ has cyclically
exchangeable increments, this implies that $s_*$ is uniformly
distributed in $[0,1]$, see \cite[Corollary 3.1]{ChUB15}. 
In particular, $\epsilon_W$ follows the Rademacher
distribution (equal to $\pm1$ with equal probability). 

We may finally state our main result in a precise form.  

\begin{thm}
  \label{T:introduction-1}
There exists a Borel mapping $\phi:m\mathcal{M}^{2\bullet}\times
  \{\pm1\}\to \mathcal{S}$ which is an a.s.\ inverse of
  $\overline{\psi}$, in the sense that: 
\begin{enumerate}[nosep,label=(\roman*)]
  \item 
  $\mathbb{P}^{2\bullet}_{\mathrm{Sphere}}(d\bfX^{2\bullet})$-a.s., and for
  every $\epsilon\in \{\pm1\}$, 
  $$\overline{\psi}\circ\phi(\bfX^{2\bullet},\epsilon)
  =(\bfX^{2\bullet},\epsilon)\,
  ,\qquad \text{ and}$$
\item $\mathbb{P}_{\mathrm{Snake}}(d h)$-a.s., 
  $$\phi\circ \overline{\psi}(h)=h\, . $$
\end{enumerate}
\end{thm}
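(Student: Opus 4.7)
The plan is to construct $\phi$ explicitly by following the six-step program of \cref{S_outline}, and then to verify measurability together with the two inversion identities. Given an input $(\bfX^{2\bullet},\epsilon)=([X,d,\mu,x^0,x^1],\epsilon)$, I would first form the cut locus $\cC=\cC(X,d,x^1)$, assign to each $y\in\cC$ the label $Z(y)=d(y,x^1)-d(x^0,x^1)$, and use Le Gall's cut-locus analysis \cite{LG10} to verify that, $\mathbb{P}^{2\bullet}_{\mathrm{Sphere}}$-almost surely, $\cC$ is path-connected, acyclic, dense, of zero $\mu$-measure, and that $Z$ extends continuously, in an intrinsic dendrite topology, to the leaf-completion $\cT$ of $\cC$.

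Next I would define the metric $d_\cT$ on $\cT$ as the quadratic variation of $Z$ along the unique path between two points, which should invert the Brownian snake construction of \cref{sec:brownian-tree-snake}: under $\mathbb{P}^{2\bullet}_{\mathrm{Sphere}}$, the resulting $(\cT,d_\cT)$ has the law of the CRT, and $Z$ the law of the Brownian labels on it. To produce the encoding pair $h=(f,g)\in\cS$, the tree $\cT$ must be equipped with a planar order. This is where the orientation variable $\epsilon$ enters: the choice of $\epsilon$ fixes one of the two possible orientations of the topological sphere $X$, which in turn induces a cyclic order on each star in $\cT$ viewed as embedded in $X$, hence a planar order. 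Contouring $\cT$ with respect to this order, starting from the point corresponding to $x^0$, produces $f$; evaluating $Z$ along the contour produces $g$.

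The main obstacle will be measurability of these geometric operations as functions of $(\bfX^{2\bullet},\epsilon)$. Each step---the cut locus, the leaf completion in the intrinsic topology, the quadratic-variation metric, and especially the orientation-dependent planar order---requires a Borel formulation. The most delicate is the planar order: since the Brownian sphere carries no smooth structure, the action of $\epsilon$ on local stars must be handled via a local approximation of the orientation, for instance by reading off the cyclic order in which short geodesics from $x^1$ approach the incident branches of $\cC$ at a generic branch point, and showing that such approximations stabilize in a Borel-measurable way. The sign convention \eqref{eq:epsilon-h} then pins down which of the two orientations is selected by each $\epsilon$, so as to consistently break the reflection ambiguity $h\leftrightarrow R(h)$ highlighted in \cref{L:brown-sphere-metr}.

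Once $\phi$ is defined, the two identities follow. For direction (ii), starting from a snake $h\sim\P_{\mathrm{Snake}}$, the cut locus of $\bfX_h^{2\bullet}$ with respect to $x_h^1$ coincides with the image under $\bp_f$ of the skeleton of $\cT_\be$, and the Busemann-type labels $d(\cdot,x^1)-d(x^0,x^1)$ match $Z-Z_{s_*}$; the quadratic-variation metric recovers $d_\be$ by the Brownian snake covariance identity of \cref{sec:brownian-tree-snake}; and the orientation fixed by $\epsilon_h$ induces on $\cT_\be$ exactly the planar order built into $f$, so that $\phi\circ\overline{\psi}(h)=h$ a.s. Direction (i) then follows from (ii) together with the fact that $\overline{\psi}$ is $\P_{\mathrm{Snake}}$-almost surely injective: the only non-injectivity of $\psi$ is the reflection $h\leftrightarrow R(h)$ from \cref{L:brown-sphere-metr}, and since $s_*(g)\neq 1/2$ almost surely one has $\epsilon_{R(h)}=-\epsilon_h$, so the extra $\{\pm1\}$ coordinate resolves this two-to-one-ness and forces $\overline{\psi}(\phi(\bfX^{2\bullet},\epsilon))=(\bfX^{2\bullet},\epsilon)$ for both values of $\epsilon$.
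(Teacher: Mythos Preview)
Your proposal follows the six-step outline of \cref{S_outline} faithfully, but the paper's actual proof departs from that outline in one crucial technical respect, and that is precisely where your argument has a gap.

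The heart of the matter is how the single bit $\epsilon$ is converted into a planar order on $\cT$ in a measurable way. You propose to read off the cyclic order of incident branches at generic branch points via ``short geodesics from $x^1$,'' but you do not explain how one bit determines all these local cyclic orders consistently and measurably, nor how the sign convention \eqref{eq:epsilon-h} is matched. The paper solves this by a different and more concrete mechanism: it uses not only the cut-locus tree $C_W$ but also the \emph{geodesic tree} $\Gamma_W=\bp_Z(\mathrm{Skel}(\cT_Z))$. For each $x\in\tX_W$, the concatenation $\gamma(x)=C_W(x_W^0,x)\cup\Gamma_W(x,x_W^0)$ is a Jordan curve in $X_W$ whose two complementary domains are exactly $p_W([0,p_W^{-1}(x)])$ and $p_W([p_W^{-1}(x),1])$. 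Taking $x=x_W^1$ gives two regions of $\mu_W$-measures $s_*$ and $1-s_*$; since $\epsilon_W=\mathrm{sgn}(1-2s_*)$, comparing these measures identifies which region is $p_W([0,s_*])$, and this fixes the orientation globally. From there the paper obtains the time parametrization \emph{directly} by the area formula $p_W^{-1}(x)=\mu_W(D_x)$, without ever discussing stars or local cyclic orders.

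This also reverses your order of operations. You propose to first recover the tree metric $d_\cT$ by quadratic variation of $Z$ and then contour with the planar order to produce $f$. The paper instead first recovers the full parametrization $s\mapsto p_W(s)$ via the area formula (and with it the process $Z$), and only afterwards recovers $\be$ on $\mathbb{Q}\cap[0,1]$ by computing the quadratic variation of $\ell=d_W(\cdot,x_W^1)-d_W(x_W^0,x_W^1)$ along the cut-locus segment $C_W(x_W^0,p_W(q))$; the point is that $\ell\circ p_W\circ\beta_q$ is, conditionally on $\be$, a Brownian motion of duration $\be_q$, so its quadratic variation over any reparametrization equals $\be_q$ (\cref{lem:recov-durat-proc}). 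Your route may be salvageable, but as written it is missing the key device---the Jordan curve built from the \emph{pair} of trees $(C_W,\Gamma_W)$ together with the area comparison---that makes the orientation step both explicit and measurable.
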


This result immediately yields that, 
$\mathbb{P}^{2\bullet}_{\mathrm{Sphere}}(d\bfX^{2\bullet})$-a.s., 
for $\epsilon\in \{\pm1\}$, 
\[
\psi\circ\phi(\bfX^{2\bullet},\epsilon)=\bfX^{2\bullet}\,,
\]  
which justifies our interpretation of $\phi$ as
an inverse of the continuum CVS mapping $\psi$. 
See also \eqref{eq:6} below for composing $\phi$ and $\psi$ in the reverse order.

In the following subsections, we will explain why the above statement
implies \cref{T_main}, which will 
also allow us to
clarify the role of the parameter $\epsilon$ and of the two marked
points that appear in the input of the mapping $\phi$.

\subsection{The role of $\epsilon$}\label{sec:the-role-of-epsilon}

Let us first discuss the meaning of the parameter $\epsilon\in\{\pm1\}$. 
We claim that, $\mathbb{P}_{\mathrm{Snake}}(dh)$-a.s., 
\begin{equation}
  \label{eq:3}
  \overline{\psi}(R(h))=(\psi(h),-\epsilon_h),  
\end{equation}
recalling that $R(h)$ is time-reversal of $h$. 
Indeed, as already mentioned,  
$\psi(h)$ and $\psi(R(h))$ are
equal as elements of $m\mathcal{M}^{2\bullet}$.
Also, $\epsilon_{R(h)}=-\epsilon_h$, 
$\mathbb{P}_{\mathrm{Snake}}(dh)$-a.s., because the
law of $s_*(g)$ under $\P_{\mathrm{Snake}}$ is diffuse, so that the probability
$s_*(g)=1/2$ (and hence that $\epsilon_{R(h)}=\epsilon_h$) is zero.  
In particular, we deduce the following two properties. First,
$\mathbb{P}_{\mathrm{Snake}}(dh)$-a.s., for $\epsilon \in \{\pm1\}$, 
\begin{equation}
  \label{eq:6}
  \phi\left( \psi(h),\epsilon\right)\in \{h,R(h)\}\, .
\end{equation}
Second, $\mathbb{P}^{2\bullet}_{\mathrm{Sphere}}(d\bfX^{2\bullet})$-a.s.,
\begin{equation}
  \label{eq:5}
  \phi(\bfX^{2\bullet},-\epsilon)=R(\phi(\bfX^{2\bullet},\epsilon))\,
  ,\qquad \epsilon \in\{\pm1\}\, .
\end{equation}
These properties show that $\epsilon$ determines the
``orientation'' of $h$. As we will see in the course of the proof
of  \cref{T:introduction-1}, $\epsilon$ 
allows one to distinguish $\mathbb{P}_{\mathrm{Snake}}$-almost surely $h$ from $R(h)$. 
Moreover, $\epsilon$ determines an
orientation of $\psi(h)$, which under $\P_h$ is a.s.\ a topological sphere, and this
orientation is reversed by changing $\epsilon$ to $-\epsilon$. See
\cref{sec:recovering-z} for more details.  

Next, we describe the joint law of
$(\psi(h),\epsilon_h)$ under $\P_{\mathrm{Snake}}$. 

\begin{prop}
  \label{T:introduction-3}
Under 
  $\mathbb{P}_{\mathrm{Snake}}(dh)$, the random variables $\psi(h)$
  and $\epsilon_h$ are independent, and  
  the latter has the Rademacher distribution.   
\end{prop}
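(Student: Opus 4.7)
The plan is to deduce the proposition from a single symmetry of the Brownian snake, namely the invariance of $\P_{\mathrm{Snake}}$ under the time-reversal involution $R:h=(f,g)\mapsto (f(1-\cdot),g(1-\cdot))$. Combined with \cref{L:brown-sphere-metr}, which gives $\psi(R(h))=\psi(h)$ identically in $m\cM^{2\bullet}$, and with the identity $\epsilon_{R(h)}=-\epsilon_h$ that holds $\P_{\mathrm{Snake}}$-a.s.\ (see \eqref{eq:3}), this will yield both independence and the Rademacher law in one line.

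First, I would verify that $R_*\P_{\mathrm{Snake}}=\P_{\mathrm{Snake}}$. For the first coordinate, this is the classical time-reversal invariance of the normalized Brownian excursion $\be$. For the second, conditionally on $\be$, the label process $Z$ is the centered Gaussian process with covariance
$$\cov(Z_s,Z_t)=\inf_{[s\wedge t,s\vee t]}\be.$$
Substituting $(1-s,1-t)$ for $(s,t)$ and setting $\tilde\be_u=\be_{1-u}$, $\tilde Z_u=Z_{1-u}$, this covariance becomes $\inf_{[s\wedge t,s\vee t]}\tilde\be$, so $\tilde Z$ has the correct conditional law given $\tilde\be$. Since $\tilde\be\stackrel{d}{=}\be$, we conclude $R(h)\stackrel{d}{=}h$ under $\P_{\mathrm{Snake}}$.

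Second, invariance under $R$ transfers to $(\psi(h),\epsilon_h)$: using \cref{L:brown-sphere-metr} for the first coordinate and \eqref{eq:3} for the second, we obtain
$$(\psi(h),\epsilon_h)\stackrel{d}{=}(\psi(R(h)),\epsilon_{R(h)})=(\psi(h),-\epsilon_h),$$
i.e.\ the joint law is preserved by the flip of the sign coordinate. For every Borel set $A\subset m\cM^{2\bullet}$ this yields
$$\P_{\mathrm{Snake}}(\psi(h)\in A,\,\epsilon_h=1)=\P_{\mathrm{Snake}}(\psi(h)\in A,\,\epsilon_h=-1)=\tfrac12\,\P_{\mathrm{Snake}}(\psi(h)\in A),$$
which is exactly the statement that $\psi(h)$ and $\epsilon_h$ are independent and that $\epsilon_h$ is Rademacher.

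The only step requiring any real justification is the first one: invariance under time-reversal of the full snake law. This is essentially an observation about the covariance kernel of $Z$, so I do not expect any serious obstacle; the remainder of the argument is a formal consequence of the two identities already established in the paper. I would also remark that this symmetry argument gives an alternative derivation of the Rademacher marginal of $\epsilon_h$ noted earlier, independent of the cyclic-exchangeability argument invoked via \cite{ChUB15}.
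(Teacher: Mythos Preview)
Your proof is correct and follows essentially the same argument as the paper: both use the time-reversal invariance $R_*\P_{\mathrm{Snake}}=\P_{\mathrm{Snake}}$ together with \cref{L:brown-sphere-metr} and \eqref{eq:3} to obtain $(\psi(h),\epsilon_h)\stackrel{d}{=}(\psi(h),-\epsilon_h)$, from which independence and the Rademacher marginal are immediate. The only difference is that you spell out the verification of $R_*\P_{\mathrm{Snake}}=\P_{\mathrm{Snake}}$ and the final deduction in more detail, whereas the paper dispatches both in a sentence.
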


\begin{proof}
  We use the well-known (and easy to check) 
  fact that $R(h)$ has same
  distribution as $h$ under $\mathbb{P}_{\mathrm{Snake}}$. Therefore, \eqref{eq:3}
  implies that $(\psi(h),\epsilon_h)$ and $(\psi(h),-\epsilon_h)$ have
  same distribution under $\mathbb{P}_{\mathrm{Snake}}$. Hence, the random variables
  $\pm \epsilon_h$ have the same law conditionally given
  $\psi(h)$. The result follows. 
\end{proof}

The above proposition and the discussion just before its statement are
in line with the intuitive fact that, 
conditionally given the (orientable, but not oriented) random sphere
$\bfX_W^{2\bullet}$ considered in \cref{sec:brownian-tree-snake}, 
the choice of orientation is
determined by an independent fair coin flip. On rigorous grounds, the mapping $\phi$
allows this choice to be done in a measurable way. 

As an immediate consequence, we have the following. 
Recall that $\mathbb{P}^{2\bullet}_{\mathrm{Sphere}}$ is
the image measure $\psi_*\mathbb{P}_{\mathrm{Snake}}$. 

\begin{corollary}
  \label{T:introduction-2}
  The measure $\mathbb{P}_{\mathrm{Snake}}$ is the image measure of
  $\mathbb{P}^{2\bullet}_{\mathrm{Sphere}}\otimes (\delta_{-1}+\delta_1)/2$ 
  under the mapping $\phi$. 
\end{corollary}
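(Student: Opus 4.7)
The plan is to observe that this corollary is essentially a direct consequence of \cref{T:introduction-1} combined with \cref{T:introduction-3}, so the proof reduces to chasing pushforwards. The strategy is to first identify the joint law of $(\psi(h),\epsilon_h)$ under $\mathbb{P}_{\mathrm{Snake}}$, and then apply part (ii) of \cref{T:introduction-1} to push forward by $\phi$.

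More concretely, first I would note that by the very definition $\overline{\psi}(h)=(\psi(h),\epsilon_h)$, and by \cref{T:introduction-3}, the random variables $\psi(h)$ and $\epsilon_h$ are independent under $\mathbb{P}_{\mathrm{Snake}}$, with respective laws $\mathbb{P}^{2\bullet}_{\mathrm{Sphere}}$ (by the very definition $\mathbb{P}^{2\bullet}_{\mathrm{Sphere}}=\psi_*\mathbb{P}_{\mathrm{Snake}}$) and $(\delta_{-1}+\delta_{1})/2$. Consequently,
\[
\overline{\psi}_*\mathbb{P}_{\mathrm{Snake}}=\mathbb{P}^{2\bullet}_{\mathrm{Sphere}}\otimes (\delta_{-1}+\delta_{1})/2.
\]

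Next, by part (ii) of \cref{T:introduction-1}, $\phi\circ\overline{\psi}(h)=h$ for $\mathbb{P}_{\mathrm{Snake}}$-almost every $h$. Applying the pushforward by $\phi$ to the previous identity and using this almost-sure relation yields
\[
\phi_*\bigl(\mathbb{P}^{2\bullet}_{\mathrm{Sphere}}\otimes (\delta_{-1}+\delta_{1})/2\bigr)=\phi_*\overline{\psi}_*\mathbb{P}_{\mathrm{Snake}}=(\phi\circ\overline{\psi})_*\mathbb{P}_{\mathrm{Snake}}=\mathbb{P}_{\mathrm{Snake}},
\]
which is the claim.

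There is no real obstacle here: the Borel measurability of $\phi$ (part of \cref{T:introduction-1}) is what makes the pushforward well-defined, and the almost-sure left-inversion provided by (ii) is precisely what is needed to transport a measure through $\phi$ after transporting it through $\overline{\psi}$. The only point worth being mildly careful about is that the identity $\phi\circ\overline{\psi}=\mathrm{id}$ only holds on a set of full $\mathbb{P}_{\mathrm{Snake}}$-measure, but this is enough for equality of pushforward measures. All the substantive work has already been done in establishing \cref{T:introduction-1,T:introduction-3}.
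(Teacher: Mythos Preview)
Your proof is correct and matches the paper's approach: the paper states this corollary as ``an immediate consequence'' of \cref{T:introduction-3} (together with \cref{T:introduction-1}), and your argument spells out exactly this pushforward computation.
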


\begin{remark}
    We mention that the choice of the random variable $\epsilon_h$ is not  canonical. 
    \cref{T:introduction-1} would remain true 
    (although for a possibly different inverse function $\phi$) 
    if we had adopted some other definition than \eqref{eq:epsilon-h}, 
    as long as it  almost surely differentiates $h$ from $R(h)$. 
    However, for a choice of $\epsilon_h$ 
    not satisfying the additional property~\eqref{eq:3}, \cref{T:introduction-2} would not be true.
\end{remark}

\subsection{The role of the marked points}
\label{sec:roles-marked-points}

The role of $\epsilon$ being now (partially) clarified, 
let us address the role of the distinguished points
$x_W^0,x_W^1$. Recall the notation $\mathbb{P}_{\mathrm{Sphere}}$ for the law of the
\emph{unmarked} Brownian sphere.

By the re-rooting property 
\cite{LG10}, conditionally given 
$[X,d,\mu]$ with law $\mathbb{P}_{\mathrm{Sphere}}$, if $y^0$ and $y^1$ are two
independent random points in $X$ with law $\mu$,  then
$[X,d,\mu,y^0,y^1]$ has law $\mathbb{P}_{\mathrm{Sphere}}^{2\bullet}$. 
In other words, the
points $x_W^0,x_W^1$ play the role of two independent marked points, 
distributed according to the area measure\footnote{Strictly speaking, this discussion is not completely accurate
  since we are working with isometry classes of spaces, and not with
  fixed space. A more precise
statement would be in terms of Markov kernels $m\mathcal{M}\to
m\mathcal{M}^{2\bullet}$; see \cite[Section 6.5]{miermont09}. 
However, for ease of exposition, 
we will use to this
slightly informal formulation.}. 

Summarizing the discussion of this and the preceding sections, we
arrive at the following statement, which is a more precise
version of  \cref{T_main}. 

\begin{corollary}
\label{T:introduction-4}
Let $\bfX=[X,d,\mu]$ have law $\mathbb{P}_{\mathrm{Sphere}}$. Conditionally
given $\bfX$, let $x^0,x^1$ be two independent
random points in $X$ with law $\mu$.
Let 
$W_\pm=\phi(\bfX^{2\bullet},\pm 1)$, 
where $\bfX^{2\bullet}=[X,d,\mu,x^0,x^1]$. 
Then, almost surely,   
$R(W_+)=W_-$ and 
$\psi(W_+)=\psi(W_-)=\bfX^{2\bullet}$. 
Moreover, if $\sigma\in \{+,-\}$ is itself random, independent of
$\bfX^{2\bullet}$, and uniformly distributed, then
$W_\sigma$ has law $\P_{\mathrm{Snake}}$. 
\end{corollary}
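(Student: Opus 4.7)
The plan is to assemble the corollary from the pieces that have already been put in place: the re-rooting property, the inverse theorem \cref{T:introduction-1}, the symmetry identity~\eqref{eq:5}, and \cref{T:introduction-2}. Given how these have been set up, the proof should be essentially a bookkeeping exercise, and the main care required is to verify that the almost sure statements line up correctly under the right measure.

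First, I would reduce the setup to the statement of \cref{T:introduction-1}. By the re-rooting property of Le Gall, the marked space $\bfX^{2\bullet}=[X,d,\mu,x^0,x^1]$ obtained by sampling two independent points from $\mu$ conditionally on $\bfX$ has law $\mathbb{P}^{2\bullet}_{\mathrm{Sphere}}$. So the two random variables $W_\pm=\phi(\bfX^{2\bullet},\pm 1)$ are defined on a sample from the measure with respect to which \cref{T:introduction-1}(i) and the identity~\eqref{eq:5} hold almost surely.

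Next I would verify the two almost sure identities. Applying~\eqref{eq:5} with $\epsilon=+1$ yields $W_-=\phi(\bfX^{2\bullet},-1)=R(\phi(\bfX^{2\bullet},+1))=R(W_+)$ almost surely. For the images under $\psi$, part~(i) of \cref{T:introduction-1} gives $\overline{\psi}\circ\phi(\bfX^{2\bullet},\pm 1)=(\bfX^{2\bullet},\pm 1)$, and projecting on the first coordinate yields $\psi(W_\pm)=\bfX^{2\bullet}$. This takes care of the first sentence.

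Finally, for the distributional claim, I would note that the joint law of $(\bfX^{2\bullet},\sigma)$ is exactly $\mathbb{P}^{2\bullet}_{\mathrm{Sphere}}\otimes (\delta_{-1}+\delta_{+1})/2$, since $\sigma$ is independent of $\bfX^{2\bullet}$ and uniform on $\{\pm 1\}$. By construction $W_\sigma=\phi(\bfX^{2\bullet},\sigma)$ is the image of this pair under $\phi$, so its law is the pushforward $\phi_*\bigl(\mathbb{P}^{2\bullet}_{\mathrm{Sphere}}\otimes (\delta_{-1}+\delta_{+1})/2\bigr)$, which \cref{T:introduction-2} identifies with $\mathbb{P}_{\mathrm{Snake}}$. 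There is no genuine obstacle here; the only subtle point is that the null set where~\eqref{eq:5} or \cref{T:introduction-1}(i) fails is a $\mathbb{P}^{2\bullet}_{\mathrm{Sphere}}$-null set of $\bfX^{2\bullet}$, which by the re-rooting property pulls back to a null set in the probability space on which $\bfX,x^0,x^1$ are defined, so all almost sure identities transfer to the setup of the corollary.
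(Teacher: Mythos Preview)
Your proof is correct and follows exactly the approach the paper intends: the corollary is presented there as a summary of the preceding discussion, and you have assembled precisely the ingredients laid out in \cref{sec:the-role-of-epsilon,sec:roles-marked-points}, namely the re-rooting property, \cref{T:introduction-1}(i), the identity~\eqref{eq:5}, and \cref{T:introduction-2}. Your remark about transferring the $\mathbb{P}^{2\bullet}_{\mathrm{Sphere}}$-null sets back via re-rooting is the only point the paper leaves implicit, and you handle it correctly.
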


\subsection{The role of the measure}\label{sec:role-measure}

Finally, let us discuss the role of the area measure $\mu$.
As shown by Le Gall \cite{LG22}, this measure is almost surely determined by the
metric structure, in the following sense. 
Fix the gauge function $H(r)=r^4\log \log
(1/r)$.
Then there exists a constant $c\in (0,\infty)$ such that
$\mathbb{P}_{\mathrm{Snake}}(dh)$-a.s., $\mu_h$ is equal to $c$ times the
$H$-Hausdorff measure on the space $(X_h,d_h)$. 
Hence the mapping $[X,d,\mu]\mapsto[X,d]$ from the 
Gromov--Hausdorff--Prokhorov space $m\mathcal{M}$ 
to the Gromov--Hausdorff space $\mathcal{M}$ is injective on a Borel set of full measure. 

Therefore, by the Lusin--Souslin theorem (already mentioned in \cref{sec:cont-inverse-cvs} above),  
there exists a Borel mapping $\chi: \mathcal{M}\to m\mathcal{M}$ such that $\chi([X,d])=[X,d,\mu]$, and
$\mathbb{P}^\circ_{\mathrm{Sphere}}(d[X,d])$-a.s., 
where $\mathbb{P}^\circ_{\mathrm{Sphere}}$ 
is the law of the unmeasured space $[X_h,d_h]$ under $\mathbb{P}_{\mathrm{Snake}}(dh)$. 
Given these observations, it is possible to state a version
of \cref{T:introduction-4}, with $\mathbb{P}_{\mathrm{Sphere}}$ replaced
by $\mathbb{P}^\circ_{\mathrm{Sphere}}$. We omit the details.

\section{Proof of \cref{T:introduction-1}}\label{sec:proof-theor-refs}

Recall that, by virtue of the Lusin--Souslin theorem, 
it suffices to prove that, for
$\mathbb{P}_{\mathrm{Snake}}$-a.e.\ $h=(f,g)\in \mathcal{S}$, 
the functions $f$ and
$g$ can be 
expressed as a function of $\bfX_h^{2\bullet}$ and $\epsilon_h$.

From now on, we let $W=(\be,Z)$ 
be a random variable with law $\P_{\mathrm{Snake}}$
defined on some probability space, and our goal is to 
show that $W$ is determined by $\bfX_W^{2\bullet},\epsilon_W$ 
on a set of full probability.  
We prove this by following the steps listed in  \cref{S_outline}.

\subsection{The two intertwined trees\label{sec:two-intertw-trees}}

If $(\TT,d)$ is an $\R$-tree and $x\in \TT$, we let $\mathrm{deg}_\TT(x)\in \{1,2,\ldots\}\cup\{\infty\}$ be the number of connected components of $\TT\setminus\{x\}$. The sets 
$$\mathrm{Leaf}(\TT)=\{x\in \TT:\mathrm{deg}_\TT(x)=1\} \quad \mbox{ and }\quad\mathrm{Skel}(\TT)=\TT\setminus\mathrm{Leaf}(\TT)$$ 
are called the set of {\it leaves} and the {\it skeleton} of $\TT$. We will use the well-known fact that if $f:[0,1]\to \R$ is a continuous function with $f(0)=f(1)=0$, then the points of $\mathrm{Skel}(\TT_f)$ are exactly those which have at least two preimages in $[0,1)$ under $p_f$. 

Our proof of \cref{T:introduction-1} will 
depend a great deal on the fact that the images of the trees
$\mathrm{Skel}(\TT_\be)$ and $\mathrm{Skel}(\TT_Z)$ under the projections
$\bp_\be$ and $\bp_Z$ are geometric loci in the Brownian
sphere. Namely, almost surely, 
 the first one is the cut locus of $(X_W,d_W)$ with
respect to the point $x_W^1$, and the second is the set of relative
interiors of geodesics towards $x_W^1$. These properties are already known, and summarized as follows.

\begin{lemma}
    \label{lem:geominterpr}
    The following properties hold almost surely. 
\begin{enumerate}[nosep,label=(\roman*)]
\item The restrictions of $\bp_\be$ and $\bp_Z$ to
$\mathrm{Skel}(\TT_\be)$ and   $\mathrm{Skel}(\TT_Z)$ are homeomorphisms onto
  their images, which we denote by $C_W$ and
  $\Gamma_W$, respectively. 
    
\item The set $C_W$ is the {\it cut locus} of $(X_W,d_W)$ with respect to $x_W^1$, which consists of the points $x\in X_W$ such that there
    exists at least two distinct geodesic paths from $x$ to $x_W^1$.
    
\item The set $\Gamma_W$ 
(of points ``inside'' geodesics to $x_W^1$) 
consists of the points $x\in X_W$ such that
      there exists a geodesic segment in $(X_W,d_W)$ that contains
      $x$, and whose extremities are $x_W^1$ and some other point
      $y\neq x$. 
    
\item It holds that $C_W\cap\Gamma_W=\varnothing$, and moreover, for every $x\in X_W\setminus \Gamma_W$ (resp.\ $x\in X_W\setminus C_W$), $x$ has a unique preimage under $\bp_\be$ (resp.\ $\bp_Z$).
    \end{enumerate}
\end{lemma}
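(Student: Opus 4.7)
The plan is to combine known results on the continuum CVS construction with the geometric description of the cut locus and of geodesics to the distinguished point $x_W^1$ in the Brownian sphere, due primarily to Le Gall \cite{LG10, LG13}. The four statements are essentially a repackaging of those results in a form adapted to our setting, so I would not attempt a from-scratch derivation of the hardest pieces, but rather extract what is needed.

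For part (i), the projections $\bp_\be$ and $\bp_Z$ are continuous by the commutative diagram \eqref{eq:commut} (indeed $d_W \leq d_g$ pointwise, with a symmetric bound obtained by exchanging the roles of $f$ and $g$ in the mating construction). The nontrivial point is injectivity on the skeletons. The idea is to argue that the equivalence classes of $d_W$ on $[0,1]$ are generated by $\{d_\be = 0\}$ together with the ``snake identifications'' between pairs of times where $Z$ attains a common infimum, and that for $\P_{\mathrm{Snake}}$-a.e.\ realization, such identifications involve only pairs of \emph{leaves} of $\TT_\be$; a symmetric statement holds for $\bp_Z$. This is essentially the content of Le Gall's analysis of $X_W$. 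Once injectivity is granted, a homeomorphism onto the image follows since the skeletons are $\sigma$-compact unions of arcs on which the projections are continuous and, arc by arc, one-to-one.

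For parts (ii) and (iii), I would invoke directly the main results of Le Gall \cite{LG10}: the cut locus of $(X_W,d_W)$ with respect to $x_W^1$ is $\bp_\be(\mathrm{Skel}(\TT_\be))$, and every geodesic to $x_W^1$ is a \emph{simple geodesic}, i.e.\ of the form $t \mapsto \bp_Z(\sigma(t))$ with $\sigma$ a path in $\TT_Z$ towards $\rho_Z$. The relative interior of such a simple geodesic visits exactly the image of $\mathrm{Skel}(\TT_Z)$ under $\bp_Z$, so the ``inside-geodesic'' set $\Gamma_W$ coincides with $\bp_Z(\mathrm{Skel}(\TT_Z))$. Unrolling the definitions, (ii) and (iii) follow.

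The main obstacle, in my view, is the disjointness $C_W \cap \Gamma_W = \varnothing$ in part (iv), which asserts that a point lying strictly inside a simple geodesic to $x_W^1$ cannot have multiple geodesics to $x_W^1$. The key almost sure input is the genericity property that branch points of $\TT_\be$ project to leaves of $\TT_Z$ and vice versa, so that the images of the two skeletons are disjoint in $X_W$; this is where care is required, as it relies on fine properties of the Brownian snake at branch times. Granting this, the uniqueness-of-preimage claims in (iv) fall out by taking complements: the identifications produced by $\bp_\be$ beyond the diagonal are exactly those encoded by the snake at times of common label minima, which project onto $\Gamma_W$, so every point of $X_W \setminus \Gamma_W$ has a unique preimage under $\bp_\be$, and symmetrically for $\bp_Z$ and $C_W$.
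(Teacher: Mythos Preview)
Your proposal is correct and follows essentially the same approach as the paper: points (i)--(iii) are obtained by citing Le~Gall's results on simple geodesics and the cut locus, and point (iv) is deduced from the fine structure of identifications in the quotient. The paper phrases the key input for (iv) slightly more crisply as the almost sure dichotomy $d_W(s,t)=0 \iff d_\be(s,t)=0 \text{ or } d_Z(s,t)=0$ (with the two alternatives mutually exclusive for $s\neq t$), citing \cite{legall07} and \cite{LGP08}, which simultaneously yields the disjointness $C_W\cap\Gamma_W=\varnothing$ and the uniqueness-of-preimage claims; but this is a matter of presentation rather than a different argument.
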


For convenience, let us also 
define  
\[
\tX_W= X_W\backslash(C_W\cup \Gamma_W).
\] 
See \cref{fig:twospheres} for an illustration
of the above results. 

\begin{proof}
Points (i)--(iii) are proved in \cite[Proposition 3.1]{LG10} and \cite[Theorem 6.3.3]{miermont14}. More precisely, it holds that the geodesic segments to $x_W^1$ in $(X_W,d_W)$ are exactly of the form $\bp_Z([[\rho_Z,a]]_Z)$, where $a\in \TT_Z$, which are called the \emph{simple geodesics} in \cite{LG10}, and one has $d_W(x_W^1,\bp_Z(a))=d_Z(\rho_Z,a)$. 

Point (iv) is a consequence of \cite[Theorem 3.4]{legall07} and \cite[Lemma 3.2]{LGP08}. Let us give a bit more details on this last point. The two aforementioned results can be reformulated as follows: 
for every $s\neq t$ in $(0,1)$, one has $d_W(s,t)=0$ if and only if $d_\be(s,t)=0$ or $d_Z(s,t)=0$, these two possibilities being mutually exclusive, and, given the characterization of $\mathrm{Skel}(\TT_f)$ recalled at the beginning of this subsection, this implies directly that $C_W\cap \Gamma_W=\varnothing$. 
Moreover, if $a\neq b$ are elements of $\TT_\be$ such that $\bp_\be(a)=\bp_\be(b)=x$, then we can find $s\neq t\in[0,1)$ such that $p_\be(s)=a$, $p_\be(t)=b$. Then it holds that $d_W(s,t)=0$, but $d_\be(s,t)>0$, which implies that $d_Z(s,t)=0$, meaning that $x\in \Gamma_W$. The argument is completely symmetric and can be applied to give the last property.  
\end{proof}

\begin{figure}
    \centering
    \includegraphics[width=0.5\linewidth]{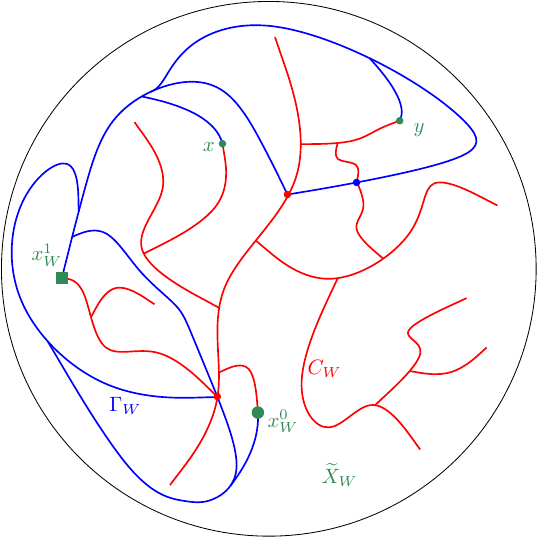}
    \caption{Some branches of the tree $\Gamma_W$ of relative interiors of geodesics toward $x_W^1$ are represented in blue, and a part of the the cut locus $C_W$ is represented in red. The two distinguished points $x_W^0$ and $x_W^1$ are almost surely not in $\Gamma_W\cup C_W$, and we have represented two more points $x,y$ outside this set.}
    \label{fig:twospheres}
\end{figure}

\subsection{Recovering the tree branches}
\label{sec:recov-tree-branches}

Working on the almost sure event that the properties of  \cref{lem:geominterpr} hold, one can define distinguished simple paths in the
Brownian sphere. 
For $x, y\in X_W\backslash \Gamma_W$, 
we let
\[
C_W(x,y)=  \bp_\be([[\bp_\be^{-1}(x),\bp_\be^{-1}(y)]]_\be). 
\]
Likewise, for $x,y\in
X_W\backslash C_W$, we define 
\[
\Gamma_W(x,y)=\bp_Z([[\bp^{-1}_Z(x),
\bp^{-1}_Z(y)]]_Z)\,.
\] 
Note that $C_W(x,y)$ and $\Gamma_W(x,y)$ are simple curves in $X_W$, by  \cref{lem:geominterpr}(i). 

\begin{lemma}\label{lem:branches}
Almost surely, it holds that:  
\begin{enumerate}[nosep,label=(\roman*)]
\item For every $x,y \in X_W\backslash C_W$, the two geodesics from
  $x$ and $y$ to $x_W^1$, meet for the first time at some point $z\in \Gamma_W$. 
  The set $\Gamma_W(x,y)$ is the union of the geodesic segment from $x$ to $z$ and the geodesic segment from $y$ to $z$, and
\begin{equation}\label{eq:distWh}
 d_Z(\bp_Z^{-1}(x),\bp_Z^{-1}(y))= d_W(x,z)+d_W(y,z).
\end{equation}
\item For every $x, y\in X_W\backslash \Gamma_W$, the set 
$$ C_W(x,y) \backslash\{x,y\}\subset C_W$$ 
is the set of points $z$ such that there exist two geodesics from $z$
to $x_W^1$ that separate $x$ and $y$ (meaning that the points $x$ and $y$ 
lie in different connected components of the complement of the geodesics).
\end{enumerate}
\end{lemma}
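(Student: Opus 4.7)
The plan for (i) is to translate the question to the tree $\TT_Z$. Since $x,y\in X_W\setminus C_W$, \cref{lem:geominterpr}(iv) gives unique preimages $a,b\in \TT_Z$ under $\bp_Z$. Let $c$ be the point at which the segments $[[\rho_Z,a]]_Z$ and $[[\rho_Z,b]]_Z$ first diverge, so that $[[a,b]]_Z=[[a,c]]_Z\cup [[c,b]]_Z$, and set $z=\bp_Z(c)$. The two simple geodesics $\bp_Z([[\rho_Z,a]]_Z)$ and $\bp_Z([[\rho_Z,b]]_Z)$ from $x,y$ to $x_W^1$, which are the only geodesics from these points to $x_W^1$ by the proof of \cref{lem:geominterpr}, first meet at $z$ and then share the common tail $\bp_Z([[\rho_Z,c]]_Z)$. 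Applying $\bp_Z$ to the decomposition of $[[a,b]]_Z$ yields the claimed description of $\Gamma_W(x,y)$; the distance identity follows since $\bp_Z$ restricted to any segment $[[\rho_Z,\cdot]]_Z$ is an isometry onto the simple geodesic it parametrizes, so $d_W(x,z)=d_Z(a,c)$, $d_W(y,z)=d_Z(c,b)$, summing to $d_Z(a,b)$.

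For (ii), let $u=\bp_\be^{-1}(x)$, $v=\bp_\be^{-1}(y)\in \TT_\be$, well defined by \cref{lem:geominterpr}(iv). The inclusion $C_W(x,y)\setminus\{x,y\}\subset C_W$ is immediate, since the interior of $[[u,v]]_\be$ lies in $\mathrm{Skel}(\TT_\be)$ and thus projects into $C_W$ by \cref{lem:geominterpr}(ii). For the characterization I work on the almost sure event that $\TT_\be$ is binary, so every $w\in\mathrm{Skel}(\TT_\be)$ has $k_w\in\{2,3\}$ preimages $s_1<\cdots<s_{k_w}$ in $[0,1)$ under $p_\be$, giving the $k_w$ simple geodesics $\gamma_i=\bp_Z([[\rho_Z,p_Z(s_i)]]_Z)$ from $\bp_\be(w)$ to $x_W^1$. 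For any pair $i\neq j$, the union $\gamma_i\cup\gamma_j$ is a Jordan curve in $X_W\simeq\mathbb{S}^2$ through $\bp_\be(w)$ and $x_W^1$, and Jordan's theorem yields two open components.

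The central topological step is to identify these two components with the $p_h$-images of the two cyclic arcs of $[0,1)$ cut out by $\{s_i,s_j\}$. Granting this, $x$ and $y$ are separated by some pair $\gamma_i,\gamma_j$ if and only if their unique $p_\be$-preimages in $[0,1)$ lie in different cyclic arcs of $[0,1)\setminus\{s_1,\ldots,s_{k_w}\}$, equivalently $u,v$ lie in different components of $\TT_\be\setminus\{w\}$, equivalently $w\in[[u,v]]_\be$, equivalently $\bp_\be(w)\in C_W(x,y)\setminus\{x,y\}$, as desired.

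The main obstacle will be the topological matching itself. My plan is to show that $p_h$ sends each cyclic arc of $[0,1)\setminus\{s_1,\ldots,s_{k_w}\}$ into a single connected subset of $X_W\setminus(\gamma_1\cup\cdots\cup\gamma_{k_w})$. Any identification $p_h(t)=p_h(t')$ with $t,t'$ in different arcs must either satisfy $d_\be(t,t')=0$, which is impossible since preimages of any point of $\TT_\be\setminus\{w\}$ lie in the cyclic arc corresponding to its component, or satisfy $d_Z(t,t')=0$; but in the latter case the simple geodesic $\bp_Z([[\rho_Z,p_Z(t)]]_Z)$ would have to cross some $\gamma_k$ at an interior point, contradicting the fact that distinct simple geodesics share only an initial common segment ending at $x_W^1$. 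Jordan's theorem and connectivity then force the two arc-images to exhaust the two open components of $X_W\setminus(\gamma_i\cup\gamma_j)$, completing the argument.
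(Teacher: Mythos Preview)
Your treatment of part~(i) is correct and essentially identical to the paper's: both arguments pull back to $\TT_Z$, use the $\R$-tree structure to find the branching point $c$, and exploit that $\bp_Z$ restricted to each segment $[[\rho_Z,\cdot]]_Z$ is an isometry onto a simple geodesic.

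For part~(ii), your overall strategy---establish a global bijection between Jordan components of $X_W\setminus(\gamma_i\cup\gamma_j)$ and cyclic arcs of $[0,1)\setminus\{s_i,s_j\}$, then read off both directions from tree combinatorics---is different from the paper's, which argues each direction separately and more directly. Your route is conceptually appealing, but the final paragraph contains a genuine gap.

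The step that fails is the claimed contradiction when $d_Z(t,t')=0$ with $t,t'$ in distinct cyclic arcs. There is no contradiction here: such identifications \emph{do} occur. What actually follows is that some $s_k$ lies in, say, the cyclic arc $[t,t']_\circ$ on which $Z_t$ realises the minimum; then $Z_t=\min_{[t,s_k]_\circ}Z$, which says precisely that $p_Z(t)\in[[\rho_Z,p_Z(s_k)]]_Z$, i.e.\ $p_h(t)\in\gamma_k$. The simple geodesic from $p_Z(t)$ does not ``cross'' $\gamma_k$ at an interior point---it is contained in $\gamma_k$---so no violation of the tree structure of simple geodesics arises. The correct conclusion is the positive one: images of different arcs can only meet on $\bigcup_k\gamma_k$. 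Even once you have this, your last sentence does not explain why a single arc-image (which is connected but certainly meets the Jordan curve) cannot touch \emph{both} open components; connectedness alone does not preclude this.

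The paper sidesteps these topological subtleties. For the converse direction it simply notes that $C_W(x,y)$ is a path from $x$ to $y$ lying in $C_W\cup\{x,y\}$, hence disjoint from $\Gamma_W$ and from any geodesic issued from $z\notin C_W(x,y)$. For the forward direction, given $z\in C_W(x,y)\setminus\{x,y\}$ with preimages $u,u'$, it takes an arbitrary path $\alpha$ from $x$ to $y$, finds the first time $r_0$ at which $\bp_\be^{-1}\circ\alpha$ ``jumps'' from one component of $\TT_\be\setminus\{c\}$ to the other, and shows---via exactly the minimum argument above---that $\alpha(r_0)$ lies on the simple geodesic from $u$ or from $u'$. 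This is essentially the same computation you attempted, but deployed locally (for one path $\alpha$) rather than to prove a global component correspondence, which is why it closes cleanly.
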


This lemma is illustrated in \cref{fig:twocurves}. 

\begin{figure}
    \centering
    \includegraphics[width=0.4\linewidth]{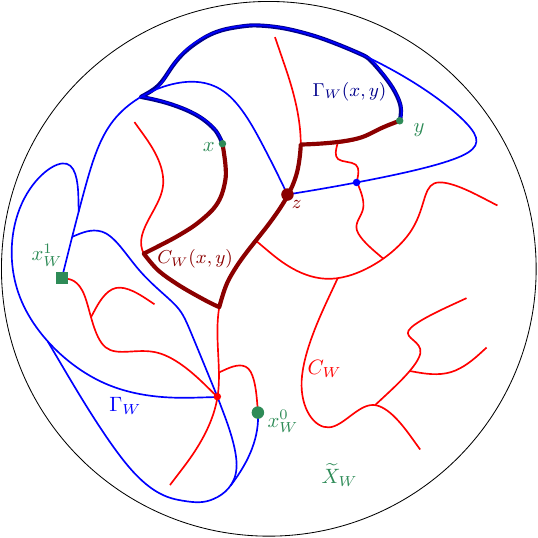}
    \caption{From the points $x,y\in \widetilde{X}_W$, there is a unique geodesic pointing towards $x^1_W$, which allows to identify $\Gamma_W(x,y)$, represented by the thick dark blue line. 
    The curve $C_W(x,y)$, represented by the thick dark red line, consists of points from which we can find (at least) two geodesics pointing towards $x^1_W$ that separate $x$ from $y$. One of these points, called $z$, is highlighted together with the two relevant geodesics.}
    \label{fig:twocurves}
\end{figure}

\begin{proof}
    The first point (i) is proved in \cite[Lemmas 4 and 5]{Mie13}, but we recall the main ideas. We use the fact that $\Gamma_W(x,x_W^1)$ is a geodesic path in $(X_W,d_W)$, isometric to the segment $[[\bp_Z^{-1}(x),\rho_Z]]_Z$ in $(\TT_Z,d_Z)$. The fact that $\TT_Z$ is an $\R$-tree shows that the paths $\Gamma_W(x,x_W^1)$ and $\Gamma_W(y,x_W^1)$ coincide on a maximal final segment of the form $\Gamma_W(z,x_W^1)$. Moreover, one has 
    \begin{align*}
    d_Z(\bp_Z^{-1}(x),\bp_Z^{-1}(y))&=d_Z(\bp_Z^{-1}(x),\bp_Z^{-1}(z))+d_Z(\bp_Z^{-1}(y),\bp_Z^{-1}(z))\\
    &=d_W(x,z)+d_W(y,z).
    \end{align*} 

    For the second point (ii), we first observe that if $z\notin C_W(x,y)\setminus\{x,y\}$, then the continuous path $C_W(x,y)$ connects $x$ and $y$ without intersecting $z$ (except possibly at its extremities) nor $\Gamma_W$ because of  \cref{lem:geominterpr}(iv). In particular, the geodesics from $z$ do not disconnect $x$ from $y$. 
    
    Conversely, let $z\in C_W(x,y)\setminus \{x,y\}$, and let $a=\bp_\be^{-1}(x)$ and $b=\bp_\be^{-1}(y)$. Note that $c=\bp_\be^{-1}(z)$, being an element of $\mathrm{Skel}(\TT_\be)$, splits the latter into two or three connected components. Let us assume for simplicity that we are in the first case (the case of three components can be treated similarly). This means that there exist exactly two times $u\neq u'\in (0,1)$ such that $p_\be(u)=p_\be(u')=c$. Our goal is to show that the union of the two geodesic segments $\bp_Z([[p_Z(u),\rho_Z]]_Z)$ and $\bp_Z([[p_Z(u'),\rho_Z]]_Z)$
    from $z$ to $x_W^1$ in $(X_W,d_W)$ separates $x$ from $y$. Note that these two paths form a ``lollipop'' shape: they both start from $z$, remain disjoint in a neighborhood of $z$, and then merge into a final common segment, and consequently, they indeed separate $X_W$ into two connected components. 
    
    In order to show that $x$ and $y$ do not belong to the same connected component, we consider a continuous path $\alpha:[0,1]\to X_W$ from $x$ to $y$, and aim to show that $\alpha$ intersects the union of geodesics from $z$ to $x_W^1$ in $(X_W,d_W)$. Let us call $\TT_\be(a)$ (resp.\ $\TT_\be(b)$) the component of $\TT_\be\setminus \{c\}$ containing $a$ (resp.\ $b$). Let $r_0=\inf\{r\in [0,1]:\bp_\be^{-1}(\alpha(r))\in \TT_\be(b)\}$. Then there exist sequences $(r_n),(r'_n)$ that are respectively non-decreasing and non-increasing with limit $r_0$ such that $\bp_\be^{-1}(\alpha(r_n))\in \TT_\be(a)$ and $\bp_\be^{-1}(\alpha(r_n'))\in \TT_\be(b)$ for every $n$, and up to extracting a subsequence, we may assume that the latter sequences converge to $a'\in \TT_\be(a)\cup \{c\}$ and $b'\in \TT_\be(b)\cup\{c\}$, such that $\bp_\be(a')=\bp_\be(b')=\alpha(r_0)$. 
    
    If $\alpha(r_0)=z$, then we obtain the desired conclusion. Otherwise, we have $a'\in \TT_\be(a)$ and $b'\in \TT_\be(b)$, and this implies the existence of $s',t'\in [0,1]$ such that $p_\be(s')=a'$ and $p_\be(t')=b'$, and $d_W(s',t')=0$ (since $a'$ and $b'$ both project to $\alpha(r_0)$ via $\bp_\be$). We know that this implies that $d_\be(s',t')=0$ or $d_Z(s',t')=0$, but the first case is excluded since $a'\neq b'$. This means that $Z_{s'}=Z_{t'}=\min_{[s',t']_\circ}Z$ or $Z_{s'}=Z_{t'}=\min_{[t',s']_\circ}Z$, depending on which of the minima is the largest one (recalling the notation for cyclic intervals from \cref{sec:trees-functions}). 
    Since $c$ is on the path of $\TT_\be$ from $a$ to $b$, it must hold that one of $u$ or $u'$ is in $[s',t']_\circ$, and  the other one is in $[t',s']_\circ$. This implies that $p_W(s')=p_W(t')$ is on the simple geodesic from $u$ or from $u'$, as desired. 
\end{proof}

\subsection{Orienting the sphere and recovering $Z$}
\label{sec:recovering-z}

In this section, we explain how one can recover the process $Z$ from $(\bfX^{2\bullet}_W,\epsilon_W)$. 
Recall that, a.s., $x_W^0,x_W^1\in \tX_W$. 
The
preceding lemma allows to describe $C_W$, $\Gamma_W$, $\tX_W$,
and the segments
$C_W(x,y)$ and $\Gamma_W(x,y)$ for $x,y\in \tX_W$, in terms of
$(X_W,d_W,\mu_W,x_W^0,x_W^1)$, 
in a way that is invariant with
respect to isometries. That is, if we select an element
$(X,d,\mu,x^0,x^1)$ in the isometry class
$\bfX_W^{2\bullet}$, then we can define sets
$C,\Gamma,\tX$, and $C(x,y)$ and $\Gamma(x,y)$ for every $x,y\in
\tX$, in such a way that a measure-preserving isometry  from
$(X_W,d_W,\mu_W,x_W^0,x_W^1)$ 
to $(X,d,\mu,x^0,x^1)$ sends $C_W$ to
$C$, $\Gamma_W$ to $\Gamma$, etc. This will be crucial in our main
proof.

\begin{lemma}
Almost surely, for every $x\in \tX_W\backslash\{x_W^0\}$, 
the curve $\gamma(x)=C_W(x_W^0,x)\cup \Gamma_W(x,x_W^0)$ is a closed, 
simple loop in $X_W$, and 
$p_W([0,p_W^{-1}(x)])$ and $p_W([p_W^{-1}(x),1])$ 
are the closures of the two Jordan domains that it separates.
\end{lemma}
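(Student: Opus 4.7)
My plan is to proceed in three parts: first verify that $\gamma(x)$ is a Jordan curve and that $p_W^{-1}(x)$ reduces to a single time $s$; next establish the intersection identity $p_W([0,s])\cap p_W([s,1])=\gamma(x)$; finally conclude by the Jordan curve theorem together with a clopen argument.

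For the first part, I would observe that $C_W(x_W^0,x)$ and $\Gamma_W(x_W^0,x)$ are each simple as homeomorphic images of $\R$-tree geodesics, by \cref{lem:geominterpr}(i). Their relative interiors lie in $C_W$ and $\Gamma_W$ respectively, which are disjoint by \cref{lem:geominterpr}(iv), and their common endpoints $x_W^0,x$ belong to $\tX_W$ and hence to neither set, so the two arcs meet only at $\{x_W^0,x\}$. Hence $\gamma(x)$ is a simple closed loop in $X_W$. For the singleton claim, since $x\notin C_W\cup\Gamma_W$, the unique preimage $a=\bp_\be^{-1}(x)$ is a leaf of $\TT_\be$ (else $x\in C_W$) and likewise $b=\bp_Z^{-1}(x)$ is a leaf of $\TT_Z$; hence $p_\be^{-1}(a)=\{s_a\}$ and $p_Z^{-1}(b)=\{s_b\}$ are singletons, and the Le Gall--Paulin dichotomy of \cref{lem:geominterpr}(iv) applied to $d_W(s_a,s_b)=0$ forces $s_a=s_b=:s\in(0,1)$. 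The same dichotomy rules out any further preimage of $x$.

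The core step is the identity $p_W([0,s])\cap p_W([s,1])=\gamma(x)$. The inclusion $\supset$ is immediate from $p_W(\{0,1\})=\{x_W^0\}$, $p_W(s)=x$ and the spine structure. For $\subset$, take $y=p_W(t_1)=p_W(t_2)$ with $t_1\in[0,s]$, $t_2\in[s,1]$ and the pair not both trivial; then $d_W(t_1,t_2)=0$, and by the dichotomy of \cref{lem:geominterpr}(iv), either $d_\be(t_1,t_2)=0$ or $d_Z(t_1,t_2)=0$. In the first case, $c:=p_\be(t_1)=p_\be(t_2)$ admits $p_\be$-preimages in both arcs $[0,s]$ and $[s,1]$ of $\mathbb{S}^1$; by the standard cyclic characterization of tree geodesics (a point $c\in\TT_f$ lies on $[[p_f(u),p_f(v)]]_f$ iff it has $p_f$-preimages on both arcs of $\mathbb{S}^1\setminus\{u,v\}$), this is equivalent to $c\in[[p_\be(0),p_\be(s)]]_\be=[[\rho_\be,a]]_\be$, so $y\in C_W(x_W^0,x)$. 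The $Z$-case is strictly analogous and yields $y\in\Gamma_W(x_W^0,x)$.

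Finally, since $X_W$ is a.s.\ homeomorphic to $S^2$, the Jordan curve theorem gives $X_W\setminus\gamma(x)=D_1\sqcup D_2$ with $D_1,D_2$ open Jordan domains. The sets $A:=p_W([0,s])\setminus\gamma(x)$ and $B:=p_W([s,1])\setminus\gamma(x)$ are disjoint by the intersection identity, cover $X_W\setminus\gamma(x)$ (since $p_W$ is surjective), and are each closed in the subspace topology of $X_W\setminus\gamma(x)$. Both are nonempty, since a.e.\ $t\in(0,s)$ (resp.\ $(s,1)$) satisfies $p_W(t)\in\tX_W\setminus\{x_W^0,x\}\subset X_W\setminus\gamma(x)$. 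Thus $\{A,B\}$ is a nonempty clopen partition of the connected components $D_1,D_2$, forcing $\{A,B\}=\{D_1,D_2\}$, and the claim follows by taking closures. The main obstacle I anticipate is the intersection step, which hinges on applying the cyclic characterization of tree geodesics simultaneously in both trees $\TT_\be$ and $\TT_Z$, and relies crucially on the mutual exclusion of the two equivalence relations provided by \cref{lem:geominterpr}(iv).
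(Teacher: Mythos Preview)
Your proposal is correct and rests on the same computation as the paper's proof: both arguments reduce to showing that any point with $p_W$-preimages in both $[0,s]$ and $[s,1]$ must lie on $\gamma(x)$, via the dichotomy $d_W(t_1,t_2)=0\Rightarrow d_\be(t_1,t_2)=0$ or $d_Z(t_1,t_2)=0$ of \cref{lem:geominterpr}(iv), together with the identification of the resulting tree point as lying on the relevant geodesic segment.

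The only difference is organizational. The paper, in the style of \cref{lem:branches}(ii), runs a path-crossing argument: it takes a continuous path $\alpha$ from $p_W([0,s])\setminus\gamma(x)$ to $p_W([s,1])\setminus\gamma(x)$, locates the first entry time $r_0$ into $p_W([s,1])$, and observes by compactness that $\alpha(r_0)$ has $p_W$-preimages in both intervals, whereupon the dichotomy forces $\alpha(r_0)\in\gamma(x)$. You instead isolate this computation as the intersection identity $p_W([0,s])\cap p_W([s,1])=\gamma(x)$ and then finish with a clean clopen-partition argument in $X_W\setminus\gamma(x)$. Your route is slightly more self-contained topologically (the clopen step makes the identification with the two Jordan domains explicit, whereas the paper's ``it suffices to show their boundary is $\gamma(x)$'' is a touch terse), while the paper's route has the virtue of paralleling the earlier lemma. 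Substantively, though, the two proofs are the same.
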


\begin{proof}
Let $x\in \tX_W$ and $s=p_W^{-1}(x)$. Observe 
that the two closed simple
curves $C_W(x_W^0,x)$ and $\Gamma_W(x,x_W^0)$ intersect only at
their extremities, since 
$C_W\cap \Gamma_W=\varnothing$ and $x_W^0\in \tX_W$. 
As a consequence, their union $\gamma(x)$ forms a simple loop
separating two disks in $X_W$. Furthermore, the sets $p_W([0,s])$
and $p_W([s, 1])$ are compact and connected, with union $X_W$. Therefore, it
suffices to show that 
their boundary is $\gamma(x)$.

To this end, we proceed similarly to the proof of \cref{lem:branches}. We let $\alpha:[0,1]\to X_W$ be a continuous path with $\alpha(0)\in p_W([0,s])\setminus \gamma(x)$ and $\alpha(1)\in p_W([s,1])\setminus \gamma(x)$, and we aim to prove that $\alpha$ intersects $\gamma(x)$. Let $r_0=\inf\{r\in [0,1]:\alpha(r)\in p_W([s,1])\}$. Then, by compactness, there must exist $s'\in [0,s]$
and $t'\in [s,1]$ such that $p_W(s')=p_W(t')=\alpha(r_0)$. If $s'$ or $t'$ equals $s$, then this means that $\alpha(r_0)=x$ and we are done. Otherwise, we know that either $d_\be(s',t')=0$ or $d_Z(s',t')=0$. 
In the first case, we deduce $\alpha(r_0)\in C_W(x_W^0,x)$, 
and in the second
case,  $\alpha(r_0)\in \Gamma_W(x,x_W^0)$, which concludes the proof.
\end{proof}

\begin{figure}
    \centering
    \includegraphics[width=0.4\linewidth]{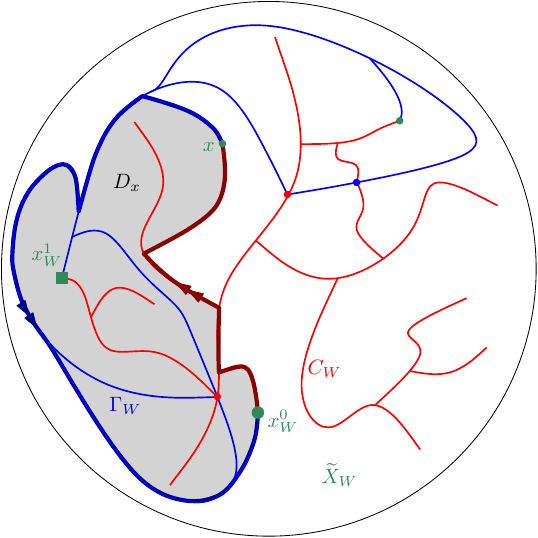}
    \caption{The curve $\gamma(x)$ is illustrated in thick lines, and the domain $D_x=p_W([0,p_W^{-1}(x)])$ is the gray area. If we choose to orient the curve $\gamma(x)$ by first following $C_W(x_W^0,x)$ from $x_W^0$ to $x$, then the Brownian sphere is canonically oriented in such a way that $D_x$ is circled counterclockwise by $\gamma(x)$, for any choice of $x\in \widetilde{X}_W\setminus \{x_W^0\}$. }
    \label{fig:twotrees}
\end{figure}

For a given $x\in \tX_W\backslash\{x_W^0\}$, we may choose
the orientation of $X_W$ such that the loop $\gamma(x)$, oriented by following the segment 
$C_W(x_W^0,x)$ from $x_W^0$ to $x$ and $\Gamma_W(x,x_W^0)$
from $x$ to $x_W^0$, goes around $p_W([0,p_W^{-1}(x)])$ counterclockwise.

We claim that this choice of orientation is independent of
$x$. To see this, note that if
$p_W^{-1}(x)<p_W^{-1}(x')$, we have that 
\[
p_W([0,p_W^{-1}(x)])\subset
p_W([0,p_W^{-1}(x')]), 
\]
and that
the paths $C_W(x_W^0, x)$ and $C_W(x_W^0,x')$
have nontrivial intersection, and similarly for $\Gamma_W(x,x_W^0)$ and
$\Gamma_W(x',x_W^0)$. The argument is similar if $p_W^{-1}(x')<p_W^{-1}(x)$. 
See \cref{fig:twotrees} for an illustration of the above discussion. 

This being said, we cannot a priori distinguish the two regions only from the isometry class $\bfX_W^{2\bullet}$. 
This is where the sign variable $\epsilon_W$ defined at \eqref{eq:epsilon-h} comes into play. In order
to make this specific choice of orientation, we take 
$x=x_W^1$ as a reference point, and decide that, 
among the two regions
separated by  $\gamma(x_W^1)$, the  region
circled counterclockwise by $\gamma(x_W^1)$ 
is the one of smallest $\mu_W$-measure
if $\epsilon_W=1$, or the one of largest $\mu_W$-measure if $\epsilon_W=-1$. There is an ambiguity when both regions have the same measure, but this happens with probability $0$ since $s_*=s_*(W)$ is a uniform random variable in $[0,1]$. In this way, we see that it is indeed possible to identify the region $p_W([0,s_*])$, and hence, the canonical orientation of $X_W$. 

Once this orientation choice is made, we can finally recover the process $Z$ as follows. 
For each $x\in \tX_W$, 
we define $D_x$ to be the disk in $X_W$ 
bounded by $C_W(x_W^0,x) \cup \Gamma_W(x,x_W^0)$, 
and such that the loop 
$\gamma(x)$
goes around $D_x$ in counterclockwise order, for the canonical orientation. From the above discussion, it holds that $D_x=p_W([0,p_W^{-1}(x)])$, and therefore, we see that we can identify $p_W^{-1}(x)$ purely in metric-measure terms, as follows. 
\begin{corollary} Almost surely, 
it holds that 
$$\forall x\in \tX_W,\qquad p_W^{-1}(x)= \mu_W(D_x).$$
\end{corollary}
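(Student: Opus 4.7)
The plan is to compute $\mu_W(D_x)$ directly from the defining property $\mu_W=(p_W)_*\mathrm{Leb}_{[0,1]}$, using the identity $D_x=p_W([0,s])$ (with $s=p_W^{-1}(x)$) that was already established in the paragraph preceding the statement. First one has to verify that $p_W^{-1}(x)$ really is a singleton for $x\in\tX_W$: applying \cref{lem:geominterpr}(iv) to $\bp_\be$, the point $x$ has a unique $\bp_\be$-preimage $a\in\TT_\be$, and $a$ cannot lie in $\mathrm{Skel}(\TT_\be)$ (otherwise $x\in C_W$), so $a$ is a leaf, $p_\be^{-1}(a)$ is a single time, and $s=p_W^{-1}(x)$ is well defined.

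Once $s$ is in hand, $\mu_W=(p_W)_*\mathrm{Leb}$ gives $\mu_W(D_x)=\mathrm{Leb}(p_W^{-1}(p_W([0,s])))$, which differs from $s=\mathrm{Leb}([0,s])$ only by the measure of $N:=p_W^{-1}(p_W([0,s]))\setminus[0,s]$. Any $t\in N$ satisfies $p_W(t)=p_W(t')$ for some $t'\in[0,s]$ with $t'\neq t$; the characterization of pairs $(t,t')$ with $d_W(t,t')=0$ recalled in the proof of \cref{lem:geominterpr} then forces $p_W(t)\in C_W\cup\Gamma_W$, so $N\subseteq p_W^{-1}(C_W\cup\Gamma_W)$. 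Everything therefore reduces to showing that $p_W^{-1}(C_W\cup\Gamma_W)$ is Lebesgue-null, which, since $\mu_W=(p_W)_*\mathrm{Leb}$, is the same as $\mu_W(C_W)=\mu_W(\Gamma_W)=0$.

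The vanishing $\mu_W(C_W)=0$ is precisely Le Gall's cut-locus result \cite{LG10} already recalled in \cref{S_outline}. For $\Gamma_W$, one combines \cref{lem:geominterpr}(i) and (iv) to identify $p_W^{-1}(\Gamma_W)$ with $p_Z^{-1}(\mathrm{Skel}(\TT_Z))$, and then invokes the standard fact that $(p_Z)_*\mathrm{Leb}$ charges no point of $\mathrm{Skel}(\TT_Z)$, giving $\mu_W(\Gamma_W)=0$. Working on the almost-sure event on which both quantities vanish then yields $\mathrm{Leb}(N)=0$ and hence the desired equality $\mu_W(D_x)=s=p_W^{-1}(x)$, simultaneously for every $x\in\tX_W$. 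The argument is essentially routine once the description $D_x=p_W([0,s])$ is available; the only mild subtlety, and the closest thing to an obstacle, is arranging the ``for every $x$'' quantifier to lie outside a single null event, rather than producing a separate exceptional set for each $x$. This is automatic from the form of the bound $N\subseteq p_W^{-1}(C_W\cup\Gamma_W)$, whose right-hand side does not depend on $x$.
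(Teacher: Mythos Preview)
Your proof is correct and supplies exactly the computation the paper leaves implicit: in the text the corollary is stated with no proof, as an immediate consequence of the identity $D_x=p_W([0,p_W^{-1}(x)])$ together with $\mu_W=(p_W)_*\mathrm{Leb}_{[0,1]}$. Your reduction to $\mu_W(C_W)=\mu_W(\Gamma_W)=0$, and the observation that the exceptional set is independent of $x$, are precisely the points one has to check.

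One phrasing issue: the sentence ``$(p_Z)_*\mathrm{Leb}$ charges no point of $\mathrm{Skel}(\TT_Z)$'' literally asserts only that singletons in the skeleton have zero mass, which is not enough since $\mathrm{Skel}(\TT_Z)$ is uncountable. What you need (and evidently intend) is the stronger fact $(p_Z)_*\mathrm{Leb}\bigl(\mathrm{Skel}(\TT_Z)\bigr)=0$, i.e.\ that Lebesgue-a.e.\ $t\in[0,1]$ projects to a leaf of $\TT_Z$; this is indeed known for the head of the Brownian snake, but the wording should be tightened.
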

We have thereby identified $p_W^{-1}(\tX_W)$, the restriction of $p_W$ to
$p_W^{-1}(\tX_W)$, and the restriction of $Z$ to
$p_W^{-1}(\tX_W)$, using \eqref{eq:distWh} and the observation that 
\[
Z_s-Z_{s_*(W)}=d_Z(\bp_Z^{-1}(p_W(s)),\bp_Z^{-1}(x_W^1))\, ,\quad s\in p_W^{-1}(\tX_W). 
\]
The functions $p_W$ and $Z$ can then be extended by continuity to $[0,1]$. 

We conclude this section by noting, as stressed in the previous
section, that our constructions are invariant with
respect to isometries. That is, applying them to any choice of 
element in the isometry class $\bfX_W^{2\bullet}$ produces the same
function $Z$.

\subsection{Recovering $\be$}

It remains to determine $\be$ on $[0,1]$, which requires a slightly different argument since there is no formula similar to~\eqref{eq:distWh} for expressing the distance $d_\be$ directly in terms of $[X_W, d_W]$.  
As noted, the idea here is that $\be$ encodes a known continuum tree, and we are also provided with the values of the Brownian snake parametrized by that tree. 
Thus in order to recover $\be$ we just need to know the length of paths in the tree.
These can be deduced from the Brownian snake by measuring its quadratic variation. 
See \cref{lem:recov-durat-proc} for this last step.
We proceed to make this precise.

First, we note that it suffices to determine $\be$ on 
$\mathbb{Q}\cap[0,1]$, as $\be$ can then be extended to $[0,1]$ by continuity. 
Observe that $p_W( \mathbb{Q}\cap [0,1])\subset \tX_W$, almost surely\footnote{There is a small subtlety here: the information given by the dense
  set $p_W(\mathbb{Q}\cap[0,1])$ is not obviously invariant under isometries. The reason why this indeed holds is that we 
  have identified the parametrization by $[0,1]$ (i.e., the measure 
  and order structure) in the previous subsection, 
  so we 
  can define ``the point visited at time $q\in \mathbb{Q}$'' by $p_W$ in purely measure-metric terms.}.
Consider the function $\ell$ on $X_W$ defined by 
\[
\ell(x)=d_W(x,x_W^1)-d_W(x_W^0, x_W^1).
\]
Fix $q\in \mathbb{Q}\cap [0,1]$, and let $\beta_q(r)=\sup\{t\leq q:\be_t=r\}$ for $0\leq r\leq \be_q$. Then $p_\be\circ \beta_q$ is isometric with image $[[\rho_\be,p_\be(q)]]_\be$ in $(\TT_\be,d_\be)$, so that $p_W\circ \beta_q$ is a natural parametrization of $C_W(x_W^0, p_W(q))$ by $[0,
\be_q]$. By the defining properties of 
$(\be,Z)$ and formula~\eqref{eq:distWh}, the mapping $B^{(q)}=\ell\circ p_W\circ \beta_q$ is a standard Brownian motion 
with duration $\be_q$, conditionally on $\be$.

Since we do not know $\be$ a priori from $(\bfX^{2\bullet}_W,\epsilon_W)$, we do not have direct access to $\beta_q$. However, if $\alpha_q$ is any parametrization of $C_W(x_W^0, p_W(q))$ (by $[0, 1]$, say), then $\ell\circ \alpha_q$
is a time-change of this Brownian motion. This means that $(\bfX^{2\bullet}_W,\epsilon_W)$ determines $B^{(q)}$ up to reparametrization, for every $q\in \mathbb{Q}\cap [0,1]$. 

\begin{lemma}\label{lem:recov-durat-proc}
Let $B=(B_t,t\geq 0)$ be a standard Brownian motion.
Then, almost surely, the following property holds. For every increasing
continuous function $\kappa:[0,1]\to \mathbb{R}_+$ with $\kappa(0)=0$, it is
possible to compute $\kappa(1)$ as a function of $(B_{\kappa(u)},0\leq
u\leq 1)$. 
\end{lemma}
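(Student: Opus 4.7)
The plan is to recover $\kappa(1)$ from the data $X_u := B_{\kappa(u)}$, $u \in [0,1]$, as the quadratic variation of $X$ read along the dyadic partitions of $[0,1]$. Explicitly, set
$$Q_n(X) := \sum_{i=0}^{2^n-1}\bigl(X_{(i+1)/2^n} - X_{i/2^n}\bigr)^2 = \sum_{i=0}^{2^n-1}\bigl(B_{\kappa((i+1)/2^n)} - B_{\kappa(i/2^n)}\bigr)^2,$$
which is manifestly a measurable function of $(X_u)_{u\in[0,1]}$. The lemma will follow once I show that, on a single almost-sure event, $Q_n(X) \to \kappa(1)$ as $n\to\infty$ for every admissible $\kappa$.

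To analyse $Q_n$, write $T := \kappa(1)$ and $\Pi_n := \{\kappa(i/2^n) : 0 \leq i \leq 2^n\}$. Since the dyadic partitions of $[0,1]$ are nested and $\kappa$ is increasing, $\Pi_n \subset \Pi_{n+1}$; and uniform continuity of $\kappa$ on $[0,1]$ gives $\mathrm{mesh}(\Pi_n) \to 0$. Thus $Q_n$ is precisely the sum of squared increments of $B$ along a nested sequence of partitions of $[0,T]$ with vanishing mesh. For any such sequence, the independence of Brownian increments combined with the identity $\mathrm{Var}((B_t-B_s)^2) = 2(t-s)^2$ gives
$$\mathbb{E}\bigl[(Q_n - T)^2\bigr] = 2\sum_i (t_{i+1}^n - t_i^n)^2 \leq 2T\cdot\mathrm{mesh}(\Pi_n)\xrightarrow[n\to\infty]{} 0,$$
so $Q_n \to T = \kappa(1)$ in $L^2$, and hence in probability, for each fixed $\kappa$.

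The hard part is upgrading this to almost-sure convergence on a single full-measure event, uniformly over all admissible $\kappa$: the family of $\kappa$'s (and of induced partition sequences) is uncountable, so a direct union bound is unavailable. My plan for handling this is to combine an exponential concentration bound for the centred sub-exponential quadratic form $Q_n - T$ (yielding, via Bernstein's inequality, a tail of the form $\mathbb{P}(|Q_n - T| > \varepsilon) \leq 2\exp(-c\min(\varepsilon^2/(T\cdot\mathrm{mesh}(\Pi_n)),\varepsilon/\mathrm{mesh}(\Pi_n)))$) with a chaining argument over the countable family of nested partitions of $[0,T_{\max}]$ whose endpoints are dyadic rationals (for each integer $T_{\max}$); the L\'evy modulus of continuity of $B$ then transfers the resulting bound to arbitrary nested $\Pi_n$. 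Equivalently, and more conceptually, I can invoke F\"ollmer's pathwise quadratic-variation theory: on the single a.s.~event on which the standard pathwise regularity of $B$ holds (H\"older continuity together with convergence of dyadic QV sums to $t$ on every rational interval $[0,t]$), one can show by a refinement/reverse-martingale argument that for every $T \geq 0$ and every nested partition sequence of $[0,T]$ with mesh tending to zero, the QV sum converges to $T$; the uniformity over $\kappa$ then comes for free, and plugging in $T = \kappa(1)$ and $\Pi_n = \kappa(D_n)$ recovers $\kappa(1)$ as $\lim_n Q_n(X)$.
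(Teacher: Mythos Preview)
Your plan diverges from the paper's proof and, more importantly, the ``hard part'' you flag is not merely technical: the statement you would need is false. The paper's argument is a single line using a \emph{reparametrisation-invariant} statistic: count the number $N_\eps$ of steps the path $(B_{\kappa(u)})_{u\in[0,1]}$ makes on the lattice $\eps\Z$, multiply by $\eps^2$, and let $\eps\to0$. Since the sequence of lattice crossings of $B$ on $[0,\kappa(1)]$ is unchanged by any increasing continuous time-change, $N_\eps$ depends on $\kappa$ only through $\kappa(1)$; the classical a.s.\ limit $\eps^2 N_\eps(T)\to T$ then holds simultaneously for all $T$ (by monotonicity in $T$ and a rational sandwich), so no uniformity-over-$\kappa$ problem ever arises.

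Your sums $Q_n(X)$, by contrast, are not reparametrisation-invariant: they see $\kappa$ through the partition $\Pi_n=\{\kappa(i/2^n)\}$, and for path-dependent $\kappa$ they can fail to converge to $\kappa(1)$. Here is a concrete obstruction on a.e.\ Brownian path on $[0,1]$. Build $\kappa$ on dyadics inductively: at a ``halving'' step, in each current interval $[a,b]$ choose $s\in(a,b)$ with $B_s=\tfrac12(B_a+B_b)$ (this exists by the intermediate value theorem whenever $B_a\neq B_b$, a property that propagates). Each such step replaces the contribution $(B_b-B_a)^2$ by $2\cdot\bigl(\tfrac12(B_b-B_a)\bigr)^2$, so $Q_{n+1}=Q_n/2$. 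Interleave sparse ``bisection'' steps (insert true midpoints) to force the mesh to $0$; using the L\'evy modulus one checks that the QV gained at a bisection step is at most $O(n)$, which the subsequent geometric halving wipes out, so $\liminf_n Q_n=0\neq 1=\kappa(1)$. (Dually, inserting local extrema instead drives $\limsup_n Q_n=\infty$; this is a manifestation of the classical fact that the true $2$-variation of Brownian motion---the supremum of squared-increment sums over \emph{all} partitions---is a.s.\ infinite.) Consequently neither your Bernstein/chaining scheme nor the F\"ollmer/reverse-martingale route can succeed: the null set in those arguments genuinely depends on the sequence $(\Pi_n)$, and no single full-measure event carries the conclusion for all $\kappa$ at once. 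To make your strategy viable you must replace $Q_n$ by a functional of $(B_{\kappa(u)})_u$ that is invariant under increasing continuous time-change; the lattice-crossing count is exactly such a functional.
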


\begin{proof}
Consider the number of steps made by $(B_{\kappa(u)},0\leq u\leq 1)$
on the sub-lattice $\eps\Z$, multiply by $\eps^2$, 
and then take a limit. 
\end{proof}

From this observation, we deduce that almost surely, $(\be_q,q\in \mathbb{Q}\cap[0,1])$ can be recovered from the isometry class $\bfX_W^{2\bullet}$ and $\epsilon_W$.  
Finally, we conclude that $W=(\be,Z)$ is
a function of $\epsilon_W$ and the isometry class $\bfX_W^{2\bullet}$, rather than of
some particular representative. This completes the proof.

\makeatletter
\renewcommand\@biblabel[1]{#1.}
\makeatother

\end{document}